\newcommand{\regine}[1]{}
\newcommand{\olivier}[1]{}
\newcommand{\jb}[1]{}
\newcommand{\N}{\mathbb N}
\newcommand{\Z}{\mathbb{Z}}
\newcommand{\Zd}{\mathbb{Z}^d}
\newcommand{\R}{\mathbb{R}}
\newcommand{\Rd}{\mathbb{R}^d}
\newcommand{\Edo}{\overrightarrow{\mathbb{E}}^d}
\newcommand{\Eddo}{\overrightarrow{\mathbb{E}}^{d+1}_{\text{alt}}}
\renewcommand{\P}{\mathbb{P}}
\newcommand{\E}{\mathbb{E}}
\newcommand{\Ebarre}{\overline{\mathbb{E}}}
\newcommand{\Pbarre}{\overline{\mathbb{P}}}
\renewcommand{\epsilon}{\varepsilon}
\renewcommand{\phi}{\varphi}
\renewcommand{\limsup}{\overline{\lim}}
\renewcommand{\liminf}{\underline{\lim}}
\newcommand{\ie}{\emph{i.e. }}
\newcommand{\miniop}[3]{%
\renewcommand{\arraystretch}{0.6}
\begin{array}{c}
{\scriptstyle #1}\\
#2\\
{\scriptstyle #3}
\end{array}
\renewcommand{\arraystretch}{1}}
\newcommand{\pcdirdeux}{\overrightarrow{p_{c,2}}^{\text{alt}}}
\newcommand{\pcdirtrois}{\overrightarrow{p_{c,3}}^{\text{alt}}}
\newcommand{\pcdir}{\overrightarrow{p_c}^{\text{alt}}}
\newtheorem{theorem}{Theorem}[section]
\newtheorem{lemm}[theorem]{Lemma}
\newtheorem{rema}[theorem]{Remark}
\newtheorem{prop}[theorem]{Proposition}
\title{The Number of Open Paths in Oriented Percolation}
\author{Olivier Garet, Jean-Baptiste Gou\'er\'e and R\'egine Marchand}
\begin{document}

\begin{abstract}
We study the number $N_n$  of open paths of length $n$ in supercritical oriented percolation on $\Zd \times \N$, with $d \ge 1$. We prove that on the percolation event $\{\inf N_n>0\}$, $N_n^{1/n}$ almost surely converges to a positive deterministic constant. We also study the existence of directional limits.
The proof relies on the introduction of adapted sequences of regenerating times, on subadditive arguments and on the properties of the coupled zone in supercritical oriented percolation.
\end{abstract}

\maketitle
%\tableofcontents

\section{Introduction and main results}
%%%%%%%%%%%%%%%%%%%%%%%%%%%%%%%%%%%%%%%%%%%%%%%%%%%%%%%%

\subsection*{Introduction} Consider supercritical oriented percolation on $\Z^d \times \N$.
Let $N(a,b)$ denote the number of open paths from $a$ to $b$.
By concatenation of paths we get $N(a,c) \ge N(a,b)N(b,c)$.
In other words, the following superadditivity property holds:
$$
\log N(a,c)\ge \log N(a,b)+\log N(b,c).
$$
Having in mind subadditive ergodic theorems, it seems then natural to think that, on the percolation event "the cluster of the origin is infinite", the number $N_n$ of open paths with length $n$  starting from the origin should grow exponentially fast in $n$. 
However, the possibility for edges to be closed implies that $\log N(\cdot,\cdot)$ may be infinite, and therefore not integrable. 
This prevents from using subadditive techniques, at least in their simplest form. 
The growth rate of $N_n^{1/n}$ and related objects have already been studied.
Fukushima and Yoshida \cite{MR3069368} proved that  $\liminf N_n^{1/n}$ is almost surely strictly positive on the percolation event.	
Lacoin~\cite{MR2928724} proved that the straigtforward inequality $\limsup N_n^{1/n} \le p (2d+1)$ is not always an equality.
In spite of those works, to our knowledge, there was no proof of the convergence of $N_n^{1/n}$ in the literature.

Such a convergence has been obtained for a relaxed kind of percolation called $\rho$-percolation. 
Let $\rho \in (0,1)$ and let $N_n(\rho)$ denotes the number of paths with length $n$ using at least $\rho n$ open edges.
The existence of the limit $N_n(\rho)^{1/n}$  has been proved in Comets--Popov--Vachkovskaia~\cite{MR2386584} 
and in Kesten--Sidoravicius~\cite{MR2643568} by different methods.

The present paper aims to prove that in supercritical oriented percolation,    
$N_n^{1/n}$ has an almost sure  limit on the percolation event.
The proof relies on essential hitting times which have been introduced in Garet--Marchand~\cite{GM-contact} 
in order to establish a shape theorem for the contact process in random environment. 
Let us now define precisely the oriented percolation setting we work with.

%For instance, Lacoin~\cite{MR2928724}, when he studies the rate of growth of the number of open paths, uses $\limsup \frac{1}{n} \log N_n$. %Oriented percolation is also a particular case of linear stochastic evolution introduced by Yoshida~\cite{MR2462010}: the results of Fukushima and Yoshida \cite{MR3069368} imply that, almost surely on the percolation event, 
%$$\liminf \frac{1}{n} \log N_n >0.$$
%

\subsection*{Oriented percolation in dimension $d+1$.}
Let $d \ge 1$ be fixed,  and let $\|.\|_1$ be the norm on $\mathbb R^d$ defined by
$$\forall x =(x_i)_{1 \le i \le d} \in \Rd \quad \|x\|_1=\sum_{i=1}^d|x_i|.$$
%We note $B(x,R)$ the associated balls.
We consider the oriented graph whose set of sites is  $\Zd \times \N$,
%$$\{(z,n)\in \Zd \times \N\},$$
where $\N=\{0,1,2,\dots\}$, and we put an oriented edge from $(z_1,n_1)$ to $(z_2,n_2)$ if and only if $$n_2=n_1+1\text{ and } \|z_2-z_1\|_1\le 1;$$ the set of these edges is denoted by $\Eddo$. We say that $\gamma=(\gamma_i,i)_{m \le i \le n} \in (\Zd \times \N)^{n-m+1}$ is a \emph{path} if and only if
$$\forall i \in \{m,\dots,n-1\} \quad \|\gamma_{i+1}-\gamma_i\|_1 \le 1.$$
Fix now a parameter $p \in [0,1]$, and open independently each edge with probability $p$. More formally, consider the probability space $\Omega=\{0,1\}^{\Eddo}$, endowed with its Borel $\sigma$-algebra and the probability $$\P_p=(\textrm{Ber}(p))^{\otimes \Eddo},$$ where $\textrm{Ber}(p)$ stands for the Bernoulli law of parameter $p$. For a configuration $\omega=(\omega_e)_{e \in \Eddo} \in \Omega$, say that the edge $e \in \Eddo$ is open 
if $\omega_e=1$ and closed otherwise. 
A path is said \emph{open} in the configuration $\omega$ if all its edges are open in $\omega$. For two sites $(v,m), (w,n)$ in $\Zd\times\N$, we denote by $\{(v,m) \to (w,n)\}$ the existence of an open  path from $(v,m)$ to $(w,n)$. 
By extension, we denote by $\{(v,m)\to +\infty\}$ 
the event that there exists an infinite open path starting from $(v,m)$.

There exists a critical probability $\pcdir(d+1)\in(0,1)$ such that: \jb{ref ?}\regine{plutot non}\jb{tu veux dire qu'il n'existe pas de reference ?}\regine{voir peut-etre Durrette 74-89, je ne l'ai pas sous la main}
$$\P_p((0,0) \to +\infty)>0 \; \Longleftrightarrow \; p > \pcdir(d+1).$$

In the following, we assume $p>\pcdir(d+1)$, and we will mainly work under the following conditional probability:
$$\Pbarre_p(.)=\P_p(.|(0,0) \to +\infty).$$

\subsection*{Global convergence result and previous results}
Denote by $N_n$ the number of open paths of length $n$ emanating from $(0,0)$.
Our main result is the following.

\begin{theorem}
\label{THEO-lebotheoun}
Let $p>\pcdir(d+1)$. There exists a strictly positive constant $\tilde \alpha_p(0)$ such that, $\Pbarre_p$-almost surely and in $L^1(\Pbarre_p)$,
$$
\miniop{}{\lim}{n\to +\infty} \frac{1}n\log N_n =\tilde \alpha_p(0).
$$
\end{theorem}

We now recall some questions related to this convergence problem.
First, note that $\mathbb E_p(N_n)=((2d+1)p)^n$.
As noticed by Darling~\cite{MR1178959}, the sequence $\left(N_n((2d+1)p)^{-n}\right)_{n\ge 0}$ is a  non-negative martingale, so there exists a non-negative random variable $W$ such that 
$$\P_p - a.s. \quad \frac{N_n}{(2d+1)^n p^n}\longrightarrow W\text{ and }\E_p[W]\le 1.$$
Therefore, it is easy to see that
$$\frac1{n}{\log N_n}\to \log((2d+1)p) \text{ on the event }\{W>0\}.$$
So when $W>0$, $N_n$ has the same growth rate as its expectation.
In his paper~\cite{MR1178959}, Darling was seeking for conditions implying that $W>0$. It seems that these questions have been forgotten for a while, but there is currently an increasing activity due to the links with random polymers -- see for example Lacoin~\cite{MR2928724} and Yoshida~\cite{MR2462010}.
Actually, it is not always the case that $W>0$. Let us summarize some known results:
\begin{itemize}
\item[$\bullet$] $\Pbarre_p(W>0)\in\{0,1\}$. The random variable 
$$
\chi=\miniop{}{\limsup}{n\to +\infty}\frac1{n}{\log N_n}$$
is $\Pbarre_p$-almost surely constant (see Lacoin~\cite{MR2928724}).
Note that a simple Borel-Cantelli argument ensures that $\chi\le \log ((2d+1)p)$.
\item[$\bullet$]  $W=0$ a.s. if $d=1$ or $d=2$ (see Yoshida~\cite{MR2462010}).  
\item[$\bullet$]  There exists $\pcdirdeux(d+1),\pcdirtrois(d+1)\in [\pcdir(d+1),1]$ such that:
\begin{itemize}
\item $\Pbarre_p(W>0)=1$ when $p>\pcdirtrois(d+1)$ and $\Pbarre_p(W>0)=0$ when $p<\pcdirtrois(d+1)$.
\item $\chi=\log(p(2d+1))$ $\Pbarre_p$-almost surely when $p>\pcdirdeux(d+1)$ and $\chi<\log(p(2d+1))$ $\Pbarre_p$-almost surely when $p<\pcdirdeux(d+1)$.
\item $\pcdirdeux(d+1) \le \pcdirtrois(d+1)$.
\item $\pcdirtrois(d+1) < 1$ if $d \ge 3$.
\end{itemize}
See Lacoin~\cite{MR2928724} Sections 2.2 and 2.3. 
\item It is believed that $\pcdirdeux(d+1)>\pcdir(d+1)$ and thus $\pcdirtrois(d+1)>\pcdir(d+1)$ when $d \ge 2$.
Lacoin~\cite{MR2928724} proved that the inequality is indeed strict for $L$-spread-out percolation for $d\ge 5$ and $L$ large.
\end{itemize}
In any case, it is clear that we need a proof of the existence of a limit for $\frac1{n}{\log N_n}$ that would not require  $W>0$. Our next result focuses on open paths with a prescribed slope.

\subsection*{Directional convergence results}

We first need to give a few more notations and results. Oriented percolation is known as the analogue in discrete time for the contact process. 
Usually, results are proved for one model, and it is commonly admitted that the proofs could easily be adapted to the other one. For the results concerning supercritical oriented percolation we use in this work, we will thus sometimes give the reference for the property concerning the contact process without any further explanation.

We define 
$$
\xi_n  = \{y \in \Zd:  \; (0,0)\to(y,n)\} \quad \text{ and } \quad 
H_n  = \miniop{}{\cup}{0\le k\le n}\xi_k.
$$
As for the contact process, the  growth of the sets $(H_n)_{n\ge 0}$  is governed by a shape theorem  when conditioned to survive: for every $p>\pcdir(d+1)$, there exists a norm $\mu_p$ on $\R^d$ such that for every $\varepsilon>0$, $\Pbarre_p$ almost surely, 
\begin{equation}
\label{E-shapetheo}
\exists N \; \forall n\ge N \;  B_{\mu_p}(0,(1-\varepsilon)n) \subset \;   H_n +[0,1]^d \; \subset B_{\mu_p}(0,(1+\varepsilon)n),
\end{equation}  
where $B_{\mu_p}(x,r)=\{y\in\Rd:\; \mu_p(y-x)\le r\}$. \regine{attention pour la derniere partie: par defaut, les boules sont fermees}
See, for the supercritical contact process, Durrett~\cite{MR1117232} or Garet-Marchand \cite{GM-contact}.

For every set $A \subset B_{\mu_p}(0,1)$, we denote by $N_{nA,n}$ the number of open paths starting from $(0,0)$, with length $n$ and whose extremity lies in $nA \cap \Zd$.

\begin{theorem}
\label{THEO-nombreAn}
Fix $p>\pcdir(d+1)$. There exists a concave function
$$\tilde \alpha_p: \mathring B_{\mu_p}(0,1) \longrightarrow (0, \log(p(2d+1))],$$
with the same symmetries as the grid $\Zd$, such that, for every set $A$   such that $\overline{\mathring{A}}=\overline{A} \subset \mathring B_{\mu_p}(0,1)$, $\Pbarre_p$-almost surely, 
$$  \miniop{}{\lim}{n\to +\infty} \frac{1}n\log N_{nA,n} 
= \sup_{x \in A} \tilde{\alpha}_p(x).$$
\end{theorem}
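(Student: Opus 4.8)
\emph{Pointwise limits.} The first goal is to attach to each $x\in\mathring B_{\mu_p}(0,1)$ a number $\tilde\alpha_p(x)$ together with an almost sure convergence statement. The starting point is that, writing $N_{\{z\},k}$ for the number of open paths of length $k$ from $(0,0)$ to $(z,k)$, concatenation at level $m$ gives, for $\|x\|_1\le m$ and $\|y\|_1\le n$,
\[
N_{\{x+y\},\,m+n}\ \ge\ N_{\{x\},\,m}\cdot N'_{\{y\},\,n},
\]
where $N'_{\{y\},n}$ is a copy of $N_{\{y\},n}$ built from the edges between levels $m$ and $m+n$, hence independent of $N_{\{x\},m}$. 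Taking logarithms yields a superadditive structure, but $\log N_{\{z\},k}=-\infty$ with positive probability, so Kingman's theorem does not apply directly. To repair this I would, following the essential-hitting-time construction of Garet--Marchand~\cite{GM-contact}, replace the deterministic times by an increasing sequence of \emph{regeneration times} adapted to the direction $x$: at such times the cluster has ``restarted from the origin'' along $x$, the increments have exponential tails, and the logarithm of the number of open paths joining two consecutive regeneration points is integrable and forms a stationary ergodic sequence. Kingman's subadditive ergodic theorem then produces, $\Pbarre_p$-a.s., the convergence of $\frac1n\log N^{(x)}_n$ to a constant $\tilde\alpha_p(x)\in[0,\infty)$, where $N^{(x)}_n$ counts the open paths of length $n$ ending near $nx$; the first-moment identity $\E_p(N_n)=(p(2d+1))^n$ with Borel--Cantelli forces $\tilde\alpha_p(x)\le\log(p(2d+1))$.

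\emph{Properties of $\tilde\alpha_p$.} Concavity follows from the same concatenation idea run along an interpolated slope: for rational $\lambda\in[0,1]$, an open path reaching a neighbourhood of $n(\lambda x+(1-\lambda)y)$ is obtained by glueing an open path of length $\lfloor\lambda n\rfloor$ ending near $\lfloor\lambda n\rfloor x$ to one of length $n-\lfloor\lambda n\rfloor$ joining it to a neighbourhood of $n(\lambda x+(1-\lambda)y)$; by translation invariance the second factor behaves like $N^{(y)}_{n-\lfloor\lambda n\rfloor}$, and passing to the limit gives $\tilde\alpha_p(\lambda x+(1-\lambda)y)\ge\lambda\tilde\alpha_p(x)+(1-\lambda)\tilde\alpha_p(y)$; density of rationals plus lower semicontinuity extend this to all $\lambda$. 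The symmetries of $\Zd$ and of $\P_p$ pass to $\tilde\alpha_p$, and a finite concave function on the open convex set $\mathring B_{\mu_p}(0,1)$ is automatically continuous there. Strict positivity is where the coupled zone enters: by the shape theorem~\eqref{E-shapetheo}, if $\mu_p(x)<1$ then for $n$ large $nx$ lies well inside $H_n$, in fact inside the coupled zone, where the cluster issued from $(0,0)$ agrees with the one issued from the whole lower half-space and thus locally looks like an unconditioned supercritical cluster; composing this coupling (after a regeneration) with a Fukushima--Yoshida-type lower bound ($\liminf N_m^{1/m}>0$ on survival, \cite{MR3069368}) gives $N^{(x)}_n\ge e^{cn}$ for some $c(x)>0$ and $n$ large, so $\tilde\alpha_p(x)>0$.

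\emph{From points to sets.} Let $A$ satisfy $\overline{\mathring A}=\overline A\subset\mathring B_{\mu_p}(0,1)$; by continuity of $\tilde\alpha_p$, $\sup_A\tilde\alpha_p=\sup_{\mathring A}\tilde\alpha_p=\sup_{\overline A}\tilde\alpha_p$. For the lower bound it suffices to show $\liminf\frac1n\log N_{nA,n}\ge\tilde\alpha_p(x)$ for each $x\in\mathring A$, which is immediate from the monotonicity $N_{nA,n}\ge N^{(x)}_n$, valid once $n$ is large enough that the neighbourhood of $nx$ used above lies in $nA$. For the upper bound, estimate
\[
N_{nA,n}\ \le\ \sum_{y\in n\overline A\cap\Zd}N_{\{y\},n}\ \le\ \Card{n\overline A\cap\Zd}\cdot\max_{y\in n\overline A\cap\Zd}N_{\{y\},n};
\]
the cardinality is polynomial in $n$, so it is enough to prove that for every $\varepsilon>0$, $\Pbarre_p$-a.s. for $n$ large, $\max_{y\in n\overline A\cap\Zd}\frac1n\log N_{\{y\},n}\le\sup_{\overline A}\tilde\alpha_p+\varepsilon$. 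I expect this to be the \emph{main obstacle}: it requires an \emph{upper} large-deviation estimate for $N_{\{y\},n}$, uniform in $y$ and decaying fast enough in $n$ to survive a union bound over the $O(n^{d})$ endpoints — the natural route being to control a fractional exponential moment $\E_p\big(N_{\{y\},n}^{\theta}\big)$ for some small $\theta>0$ (via a many-to-one/tilting argument, or again via the regeneration decomposition), and then to use uniform continuity of $\tilde\alpha_p$ on the compact set $\overline A$ to replace $\tilde\alpha_p(y/n)$ by $\sup_{\overline A}\tilde\alpha_p$. Combining the two bounds gives $\frac1n\log N_{nA,n}\to\sup_{x\in A}\tilde\alpha_p(x)$ $\Pbarre_p$-a.s.; Theorem~\ref{THEO-lebotheoun} then follows by letting $A$ exhaust the ball (the boundary layer carrying no exponential excess since, by symmetry and concavity, $\sup\tilde\alpha_p=\tilde\alpha_p(0)$) and, for its $L^1$ part, by dominated convergence since $0\le\frac1n\log N_n\le\log(2d+1)$ on the survival event.
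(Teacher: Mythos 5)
Your directional construction (regeneration times tailored to a rational slope, Kingman along the regeneration subsequence, exponential tails for the increments) is genuinely the same starting point as the paper's Section~3, and your concavity and symmetry remarks are in the right spirit. But the convergence you extract from Kingman is only along the random subsequence of regenerating levels $S_k(y,h)$, not at every level $n$, and the passage from that subsequence limit to a bona fide value $\tilde\alpha_p(x)$ for every $x$ already requires a continuity/coupling step (the paper's Lemma~\ref{LEMM-continuitemiam}). More importantly, the place you yourself flag as the ``main obstacle'' is exactly where your proposal and the paper part ways, and your proposed route does not close the gap.

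You bound $N_{nA,n}\le \Card{n\overline A\cap\Zd}\cdot\max_{y}N_{\{y\},n}$ and then want a uniform-in-$y$ upper tail for $\frac1n\log N_{\{y\},n}$ good enough to beat the polynomial union bound, suggesting fractional moments $\E_p\big(N_{\{y\},n}^\theta\big)$. This is essentially circular: controlling $\E_p(N_{\{y\},n}^\theta)$ at the correct quenched rate $\tilde\alpha_p(y/n)$ (rather than the annealed rate $\log(p(2d+1))$, which Jensen gives and which is too large) is as hard as the theorem, and no such uniform concentration is available from the regeneration framework, which only yields limits along regenerating subsequences. The paper's Lemma~\ref{LEMM-limsupA} sidesteps the union bound over $O(n^d)$ endpoints entirely with a purely geometric funnelling argument based on the coupled zone: it fixes a \emph{finite} set $F$ of directions whose regenerating points $M_n(y,h)=(Z_n,V_n)$ sit at level $V_n\approx n(1+\epsilon)$, and shows (via the backward-time coupled zone of $M_n$) that every surviving endpoint $(x,n)$ with $x$ near $Z_n$ satisfies $(x,n)\to M_n$. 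Consequently every open path from $(0,0)$ to $(x,n)$ that continues to infinity is the initial segment of an open path to some $M_n(y,h)$, giving $\overline N_{nA,n}\le\sum_{(y,h)\in F'}\overline N_{M_n(y,h)}$ with $F'\subset F$ finite. The sum is over finitely many regenerating points where Lemma~\ref{LEMM-SsAdd} \emph{does} give a directional limit, and that, not a uniform tail bound over $O(n^d)$ endpoints, is what makes the upper bound go through. Without this coupled-zone reduction (or a genuine uniform upper large-deviation estimate, which you would still need to prove), your upper bound remains open.

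Two smaller points: the paper obtains strict positivity of $\tilde\alpha_p$ directly from $N_{(y,S_1)}\ge 2$ with positive probability (no need to invoke Fukushima--Yoshida), and the final reduction from $N_{nA,n}$ to $\overline N_{nA,n}$ requires its own coupled-zone/Borel--Cantelli argument (the paper's last subsection), which your proposal leaves implicit.
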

Since $\tilde\alpha_p$ is even and concave, the constant $\tilde\alpha_p(0)$ which appears in the statement of Theorem \ref{THEO-lebotheoun} is indeed the value
of the function $\tilde\alpha_p$ at $0$.

By considering, in Theorem \ref{THEO-nombreAn}, the  set  $A=B_{\mu_p}(x, \epsilon)$ for $x \in \mathring B_{\mu_p}(0,1)$ and for a small $\epsilon$, we see
that $\tilde{\alpha}_p(x)$ characterises the growth of the number of open paths with length $n$ and prescribed slope $x$. Using the very same technics of proof, one could for instance prove the following directional convergence result. If $x \in \Zd$, denote by $N_{x,n}$ the number of open paths from $(0,0)$ to $(x,n)$:

\begin{theorem} 
\label{CORO-limdir1}
Fix $p>\pcdir(d+1)$ and  $(y,h) \in \Zd \times \N^*$ such that $\mu_p(y)<h$. \\
\jb{Si on ne le prouve pas je serais plutot pour le mettre en simple remarque}
Extract from the sequence $(ny,nh)$ the (random) subsequence, denoted $\psi: \N \to \N$, of indices $k$ such that $(0,0) \to k.(y,h)$ . Then $\Pbarre_p$ almost surely, 
$$\lim_{n \to +\infty} \frac{1}{\psi(n)h} \log N_{\psi(n).(y,h)}=\tilde \alpha_p(y/h) .$$
\end{theorem}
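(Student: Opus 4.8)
The plan is to deduce this directional statement from Theorem \ref{THEO-nombreAn}, since $x=y/h$ lies in $\mathring B_{\mu_p}(0,1)$ by the hypothesis $\mu_p(y)<h$. First I would fix a small $\epsilon>0$ such that $B_{\mu_p}(x,\epsilon)\subset\mathring B_{\mu_p}(0,1)$, and apply Theorem \ref{THEO-nombreAn} with $A=B_{\mu_p}(x,\epsilon)$; by concavity of $\tilde\alpha_p$ and continuity (concave functions on an open convex set are continuous), $\sup_{z\in A}\tilde\alpha_p(z)\to\tilde\alpha_p(x)$ as $\epsilon\to0$, so the quantity $\frac1{nh}\log N_{nB_{\mu_p}(x,\epsilon),nh}$ converges $\Pbarre_p$-a.s.\ to something within $o_\epsilon(1)$ of $\tilde\alpha_p(x)$. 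Since $N_{k.(y,h)}\le N_{kB_{\mu_p}(x,\epsilon),kh}$ (the single endpoint $(ky,kh)$ lies in $kB_{\mu_p}(x,\epsilon)\cap(\Zd\times\{kh\})$), one half of the bound — the $\limsup$ along $\psi$ — follows immediately after letting $\epsilon\to0$ through a countable sequence.

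The lower bound is the substantive part. The idea is a concatenation/regeneration argument along the (random) subsequence $\psi$. On the event $(0,0)\to k.(y,h)$, an open path realizing this connection, followed by independent copies of the local structure, should let one build at least $N_{B,m}$-many paths of length $(k+\ell)h$ ending near $(k+\ell)(y,h)$ for suitable $m$, so that a superadditive inequality for $\log N_{\psi(n).(y,h)}$ emerges. Concretely I would show that for indices $k<k'$ both in the range of $\psi$, conditionally on the percolation in the slab $\Zd\times[0,kh]$, the number of open paths from $(0,0)$ to $k'.(y,h)$ dominates $N_{k.(y,h)}$ times the number of open paths from $k.(y,h)$ to $k'.(y,h)$ — the latter being, by translation invariance and the FKG-type monotonicity used throughout, comparable to $N_{(k'-k).(y,h)}$ on the corresponding connection event. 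Taking logarithms and invoking a subadditive ergodic theorem (in the spirit of the regeneration-time construction announced in the abstract and used for Theorems \ref{THEO-lebotheoun} and \ref{THEO-nombreAn}) gives $\frac1{\psi(n)h}\log N_{\psi(n).(y,h)}\to\beta$ a.s.\ for some constant $\beta$; combined with the upper bound and with $\beta\ge$ the lower bound one extracts from the same estimates compared against Theorem \ref{THEO-nombreAn}, one gets $\beta=\tilde\alpha_p(x)$.

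The main obstacle I anticipate is controlling the sparsity and randomness of the subsequence $\psi$: the connection event $\{(0,0)\to k.(y,h)\}$ need not have density one, so one cannot simply invoke a clean subadditive ergodic theorem on the full sequence $(ny,nh)$. This is precisely where the essential hitting times of Garet--Marchand \cite{GM-contact} and the properties of the coupled zone come in: using the shape theorem \eqref{E-shapetheo} and the fact that $x$ is in the \emph{interior} of $B_{\mu_p}(0,1)$, one knows that $(k y, kh)$ is asymptotically deep inside $H_{kh}$, so $\psi(n)/n\to1$ (the connection holds for all large $k$ on the survival event), which removes the sparsity issue and lets the superadditive argument run as for a full sequence. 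I would also need to check measurability of $\psi$ and that the exceptional null set can be chosen uniformly — both routine once the shape theorem is in hand. Given the remark \jb{...} in the statement suggesting the authors may leave this as a remark, the cleanest exposition is probably to present the argument as an application of Theorem \ref{THEO-nombreAn} plus the shape theorem, rather than redoing the regeneration machinery from scratch.
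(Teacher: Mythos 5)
The paper does not write out a proof of Theorem~\ref{CORO-limdir1}; it only says the result follows from ``the very same technics,'' namely the directional subadditive limit of Lemma~\ref{LEMM-SsAdd} combined with the coupled-zone comparison of Lemma~\ref{LEMM-continuitemiam}. Your upper bound is sound: $N_{\psi(n).(y,h)}\le N_{(\psi(n)h)B_{\mu_p}(y/h,\epsilon),\,\psi(n)h}$ (note the scaling must be by $\psi(n)h$, not $\psi(n)$, to match the notation $N_{nA,n}$), and then Theorem~\ref{THEO-nombreAn} along the subsequence $n=\psi(k)h$ together with continuity of $\tilde\alpha_p$ give $\limsup\le\tilde\alpha_p(y/h)$.

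Your lower bound, however, rests on a false claim. You assert $\psi(n)/n\to1$, i.e.\ that $(0,0)\to k.(y,h)$ for all large $k$ on survival, and you justify it via the shape theorem~\eqref{E-shapetheo}. But the shape theorem controls $H_n=\cup_{0\le j\le n}\xi_j$, not $\xi_n$ itself: it tells you $ky$ is hit at \emph{some} time $\le kh$, not that $ky\in\xi_{kh}$. The event $\{(0,0)\to k.(y,h)\}$ is exactly $\{ky\in\xi_{kh}\}$, and conditionally on survival this has probability bounded away from $1$ uniformly in $k$ (for instance, the event that all $2d+1$ edges pointing into $(ky,kh)$ are closed has probability $(1-p)^{2d+1}$ and is perfectly compatible with $\{\tau=\infty\}$). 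So $\{k:(0,0)\to k.(y,h)\}$ has asymptotic density strictly between $0$ and $1$; indeed the very formulation of the theorem via an extracted subsequence $\psi$ rather than a full limit of $\frac1{nh}\log N_{n.(y,h)}$ is the signal that $N_{n.(y,h)}=0$ for infinitely many $n$. Once $\psi(n)/n\to1$ fails, the superadditive argument ``as for a full sequence'' does not run, and the rest of the sketch (superadditivity along the random subsequence, with appeals to FKG-type monotonicity and ``comparable to $N_{(k'-k).(y,h)}$'') is too vague to close the gap.

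The machinery the paper actually has in mind requires no ergodic theorem along $\psi$ at all. Choose $(y',h')$ with $\hat y'=y'/\Ebarre_p(s(y',h'))$ close to $y/h$ (Lemma~\ref{LEMM-dirdense}), and apply Lemma~\ref{LEMM-continuitemiam} with $(Z_n^1,V_n^1)=\psi(n).(y,h)$ (note $(0,0)\to(Z_n^1,V_n^1)$ holds by definition of $\psi$) and $(Z_n^2,V_n^2)$ a regenerating point $(k_ny',S_{k_n}(y',h'))$ chosen so that $S_{k_n}(y',h')\sim\psi(n)h/(1+\epsilon)$. The lemma gives $\overline{N}_{(Z_n^2,V_n^2)}\le N_{\psi(n).(y,h)}$ eventually, the left-hand side is controlled by Lemma~\ref{LEMM-SsAdd}, and letting $\epsilon\to0$ and $\hat y'\to y/h$ yields the lower bound, exactly as in the proofs of Lemmas~\ref{LEMM-continuity} and~\ref{LEMM-liminfA}.
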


Take now as a random environment a realization of oriented percolation on $\Zd \times \N$ with parameter $p$ such that $0$ percolates. Once this random setting is fixed, choose a random open path with length $n$, uniformly among all open paths with length $n$, and ask for the
behavior of the extremity of this random path.
More precisely, for every set $A$ with $\overline{\mathring{A}}=\overline{A} \subset \mathring B_{\mu_p}(0,1)$, the probability that the extremity of the random path stands in $nA$ is 
$$\frac{N_{nA,n}}{N_n}.$$
 Then, Theorem \ref{THEO-nombreAn} can be rephrased as 
a quenched large deviations principle for the extremity of this random open path:

\begin{rema}
\label{THEO-PGD}
Fix $p>\pcdir(d+1)$. For every set $A$ such that  $\overline{\mathring{A}}=\overline{A} \subset \mathring B_{\mu_p}(0,1)$,  $\Pbarre_p$-almost surely, 
$$  \miniop{}{\lim}{n\to +\infty} \frac{1}n\log \frac{N_{nA,n}}{N_n} 
= -\inf_{x \in A} \left( \tilde{\alpha}_p(0) - \tilde{\alpha}_p(x) \right).$$
\end{rema}
\subsection*{Open questions.}
Here are a few open questions.
\begin{itemize}
\item Is the following statement true ?
$$\forall x \in \mathring B_{\mu_p}(0,1) \backslash \{0_{\Zd}\} \quad \tilde\alpha_p(x) < \tilde\alpha_p(0).$$
If the statement held, then the extremity of a random open path with length $n$, uniformly chosen among open paths with length $n$, 
would concentrate near $0_{\Zd}$.
\item Is $\tilde\alpha_p$ strictly concave ? This would imply the previous statement.
\item Does $\tilde\alpha_p$ vanish when $x$ tends to the boundary of $\mathring B_{\mu_p}(0,1)$ ?
\end{itemize}

\subsection*{Organization of the paper.}
The key ideas of our proofs are the following. 

First, in Section 2, we recall results for supercritical oriented percolation, and we build the essential hitting times. 

Then, in Section 3, we fix a vector $(y,h) \in \Zd \times \N^*$ and we build an associated sequence of regenerating times $(S_n(y,h))_n$ (see Definition \ref{DEFI-Sn}). These random times satisfy $(0,0) \to (ny,S_n(y,h))\to +\infty$ and have good invariance and integrability properties with respect to $\Pbarre_p$. We can thus apply Kingman's subbaditive ergodic theorem to  obtain, in Lemma \ref{LEMM-SsAdd}, the existence of the following limit:
$$\frac{1}{S_n(y,h)}\log(N_{ny, S_n(y,h)}) \to \alpha_p(y,h).$$

Section 4 is devoted to the proof of Theorem \ref{THEO-lebotheoun}.
The asymptotic behavior of $\log (N_n)/n$ should come from the "direction" $(y,h)$ in which open paths are more abundant, \ie in the "direction" $(y,h)$ that maximizes $\alpha_p(y,h)$.
The key step to recover a full limit from the limit of a random subsequence is the continuity lemma \ref{LEMM-limsup}: using the coupled zone, we prove in essence that two points close in $\Zd\times \N^*$ and reached from $(0,0)$ by open paths should have similar number of open paths arriving to them. 

Finally, in Section 5,  the same ideas are used to prove Theorem \ref{THEO-nombreAn}.
The arguments are however more intricate. 
That is why we chose to present an independent proof of Theorem \ref{THEO-lebotheoun} where to our opinion, each type of argument -- regenerating time, coupling -- appears in a simpler form.

\subsection*{Notation.} 
For $n \ge 1$,  $x \in \mathbb Z^d$ and any set $A\subset \Rd$, we denote by 
\begin{itemize}
\item $N_n$ the number of open paths from $(0,0)$ to $\mathbb Z^d \times \{n\}$, 
\item $\overline{N}_n$ the number of open paths from $(0,0)$ to $\mathbb Z^d \times \{n\}$ that are the beginning of an infinite open path,
\item $N_{x,n}$ the number of open paths from $(0,0)$ to $(x,n)$, 
%\item $\overline{N}_{(x,n)}$ the number of open paths from $(0,0)$ to $(x,n)$ that are the beginning of an infinite open path,
 \item $N_{A,n}$ the number of open paths from $(0,0)$ to $(A \cap \Zd) \times\{n\}$. 
\end{itemize}   
 
\section{Preliminary results}
%%%%%%%%%%%%%%%%%%%%%%%%%%%%%%%%%%%%%%%%%%%%%%%%%%%%%%%%%%
  
\subsection{Exponential estimates for supercritical oriented percolation}
We work with the oriented percolation model in dimension $d+1$, as defined in the introduction. 
We set, for $n \in \N$ and $x \in \Zd$,
\begin{align*}
\xi^x_n & =  \{y \in \Zd:  \; (x,0)\to(y,n)\}, & & H^x_n =  \miniop{}{\cup}{0\le k\le n}\xi^x_k,\\
\xi_n^{\Zd} & =  \miniop{}{\cup}{x \in \Zd}\xi^x_n, & & K'^x_n  =   \miniop{}{\cap}{k\ge n}(\xi^x_k \Delta \xi_k^{\Zd})^c,\\
\tau^x & =  \min\{n \in \N:\; \xi^x_n =\varnothing\}.
\end{align*}
To simplify, we often write $\xi_n,\tau,H_n,K'_n$ instead of $\xi^0_n,\tau^0,H^0_n,K'^0_n$.

For instance, $\tau$ is the length of the longest open path starting from the origin, and the percolation event is equal to $\{\tau=+\infty\}$. First, finite open paths cannot be too long (see Durrett~\cite{MR1117232}):
\begin{equation}
\forall p>\pcdir(d+1) \quad  \exists A,B>0 \quad \forall n \in \N \quad 
\P_p(n\le \tau<+\infty)\le Ae^{-Bn}. \label{EQ-tauexp}
\end{equation}

The set $K'_n \cap H_n$ is called the coupled zone, and will play a central role in our proofs, by allowing to compare numbers of open paths with close extremities.
As for the contact process, the  growth of the sets $(H_n)_{n\ge 0}$ and the coupled zones $(K'_n\cap H_n )_{n\ge 0}$ is governed by a shape theorem and related large deviations inequalities:

\begin{prop}[Large deviations inequalities, Garet-Marchand \cite{GM-contact-gd}]
\label{PROP-GD}
$\;$ \\ Fix $p>\pcdir(d+1)$. For every $\epsilon>0$, there exist $A,B>0$ such that, 
$$
\forall n \ge 1 \quad \overline{\P}_p \left(\begin{array}{c}B_{\mu_p}(0,(1-\varepsilon)n) \subset \; (K'_n\cap H_n) +[0,1]^d\;\\ \subset \;H_n +[0,1]^d \; \subset B_{\mu_p}(0,(1+\varepsilon)n)\end{array} \right) \ge 1- Ae^{-Bn}.$$
\end{prop}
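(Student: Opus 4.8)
The plan is to reduce the proposition to a family of pointwise exponential estimates and then sum them over the polynomially many sites involved. Since an open path moves by at most one $\ell^1$-step per unit of time, only the sites $x\in\Zd$ with $\|x\|_1\le n$ can belong to $H_n$, and $\mu_p$ being a norm on $\R^d$, balls and coordinate boxes are comparable; so there are at most $Cn^d$ relevant $x$. Since $c:=\P_p((0,0)\to+\infty)>0$, we have $\overline{\P}_p(\cdot)\le c^{-1}\P_p(\cdot)$, which removes most conditioning subtleties. The middle inclusion $(K'_n\cap H_n)+[0,1]^d\subset H_n+[0,1]^d$ is trivial, so I am left to prove: (i) $\overline{\P}_p(x\in H_n)\le Ae^{-Bn}$ whenever $\|x\|_1\le n$ and $\mu_p(x)\ge(1+\epsilon)n$; and (ii) $\overline{\P}_p(x\notin K'_n\cap H_n)\le Ae^{-Bn}$ whenever $\mu_p(x)\le(1-\epsilon)n$. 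A union bound over the relevant $x$ then gives the claim.

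For (i) I would write $\{x\in H_n\}=\bigcup_{k\le n}\{(0,0)\to(x,k)\}$ and invoke the classical exponential estimate for supercritical oriented percolation: $\P_p((0,0)\to(x,k))\le Ae^{-Bn}$ whenever $k\le n$ and $\mu_p(x)\ge(1+\epsilon)n$, uniformly in such $x$ and $k$; this belongs to the standard exponential toolkit for supercritical oriented percolation (a consequence of \eqref{EQ-tauexp} together with subadditivity of connectivities, see Durrett~\cite{MR1117232} and the estimates underlying \eqref{E-shapetheo}). Summing over the $n$ admissible values of $k$ changes only the constants.

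Part (ii) is the substantial step, and it is where the survival conditioning must be treated honestly. Here I would rely on the essential hitting times $\sigma(x)$ constructed in Section~2 after Garet--Marchand~\cite{GM-contact}: $\sigma(x)$ is a time at which $x$ is reached from $(0,0)$ by an open path that is itself the beginning of an infinite open path, and the family $(\sigma(x))_x$ is subadditive up to exponentially integrable errors, so that the large-deviation bound produced by the proof of the shape theorem~\eqref{E-shapetheo} yields $\overline{\P}_p(\sigma(x)-\mu_p(x)>\tfrac{\epsilon}{2}n)\le Ae^{-Bn}$, uniformly over the relevant $x$. On the complementary event, $\mu_p(x)\le(1-\epsilon)n$ forces $\sigma(x)\le(1-\tfrac{\epsilon}{2})n$, hence $x\in\xi_{\sigma(x)}\subset H_n$; it then remains to place $x$ in $K'_n$. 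Because $(0,0)\to(x,\sigma(x))\to+\infty$, the cluster issued from $(x,\sigma(x))$ survives and, viewed as a subset of $\Zd$ at each later time $k$, is contained in $\xi^0_k$; combining this with the inclusion $\xi^0_k\subset\xi^{\Zd}_k$, the event $\{x\in\xi^0_k\Delta\xi^{\Zd}_k\text{ for some }k\ge n\}$ can occur only if that surviving cluster and the full cluster $\xi^{\Zd}$ still disagree at the site $x$ at some time $k\ge n$, i.e. have failed to couple there within the $\ge\tfrac{\epsilon}{2}n$ available time units. A restart argument for supercritical oriented percolation---at each time-block there is a uniformly positive chance of resolving the local discrepancy, so that it decays geometrically---bounds this probability by $Ae^{-Bn}$, and (ii) follows.

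The step I expect to be the main obstacle is precisely (ii): converting the almost-sure fact ``the discrepancy between $\xi^0$ and $\xi^{\Zd}$ eventually disappears'' into the quantitative statement ``it has disappeared at $x$ by time $\mu_p(x)/(1-\epsilon)$, outside an event of probability $\le Ae^{-Bn}$''. This requires the essential hitting time, so that one conditions on a cluster that genuinely survives through $(x,\sigma(x))$ rather than merely on global survival, together with an exponential-decay estimate for the coupling of a surviving cluster with the full cluster; these are the technical core of the proof in \cite{GM-contact-gd}. By contrast, the union bounds and part (i) are routine once \eqref{EQ-tauexp} and the shape-theorem machinery are in hand.
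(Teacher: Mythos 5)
The paper does not prove Proposition~\ref{PROP-GD}: it invokes it as an external result, established in Garet--Marchand~\cite{GM-contact-gd} for the supercritical contact process in random environment, with the usual understanding that the argument transfers to discrete-time oriented percolation. There is therefore no in-paper proof to line up against. Your outline is nevertheless consistent with the strategy of~\cite{GM-contact-gd}: a union bound over polynomially many sites, exponential decay of long-range connectivities for the outer inclusion $H_n+[0,1]^d\subset B_{\mu_p}(0,(1+\epsilon)n)$, and the essential hitting time $\sigma(x)$ together with the invariance of $\Pbarre_p$ under $\tilde\theta_x$ (Proposition~\ref{PROP-sigma}.b) for the inner inclusion. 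Part (i) and the reduction of $\overline{\P}_p$ to $\P_p$ are fine.

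Your step (ii), however, is where all the content lives, and as written it is an outline, not a proof. Two things must be made explicit. First, the reduction to a single-site estimate: on the event $\{\sigma(x)\le(1-\tfrac{\epsilon}{2})n\}$, one should show that $x\notin K'_n$ forces $0\notin K'^0_{m}\circ\tilde\theta_x$ with $m=n-\sigma(x)\ge\tfrac{\epsilon}{2}n$. The point is that for $j\ge n>\sigma(x)$, any open path from $\Zd\times\{0\}$ to $(x,j)$ passes through level $\sigma(x)$, so $x\in\xi^{\Zd}_j$ implies $x\in\xi^{\Zd}_{j-\sigma(x)}\circ\theta_{(0,\sigma(x))}$; if moreover $0\in K'^0_{j-\sigma(x)}\circ\theta_{(x,\sigma(x))}$, then $(x,\sigma(x))\to(x,j)$, and since $(0,0)\to(x,\sigma(x))$ this gives $x\in\xi^0_j$. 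Since $\tilde\theta_x$ preserves $\Pbarre_p$, this reduces the pointwise bound to $\Pbarre_p(0\notin K'^0_m)\le Ae^{-Bm}$, which you do not state. Second, this last single-site bound is exactly what your ``restart argument'' is supposed to produce, and it is the genuinely hard part of~\cite{GM-contact-gd}: one must exhibit a local space-time event that simultaneously erases the discrepancy between $\xi^0$ and $\xi^{\Zd}$ at the origin and keeps the conditioned cluster alive, and then iterate it compatibly with the conditioning on survival. Asserting that ``at each time-block there is a uniformly positive chance of resolving the local discrepancy'' is a restatement of what has to be proved, not a proof of it. Your sketch correctly locates where the work is; it does not do the work.
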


\begin{figure}[h!]
\begin{tikzpicture}
\draw (0,-0.05) node {$\bullet$} ;
\draw (0,0) node[below]{$(0,0)$} ;
\draw (0,0) -- (2.2,3);
\draw (0,0) -- (1.8,3);
\draw (0,0) -- (-2.2,3);
\draw (0,0) -- (-1.8,3);
\draw (-2.4,2.5) -- (2.4,2.5);
\draw (-2.4,0) -- (2.4,0);
\draw (1.1,2.45) node {$\bullet$} ;
\draw (1.1,2.5) node[above]{$(x,n)$};
\draw (-1.1,-0.05) node {$\bullet$} ;
\draw (-1.1,0) node[below]{$(y,0)$};
\draw[color=blue] (-1.1,0)--(-0.8,0.5)--(-0.1,1)--(0.5,1.5)--(0.2,2)--(1.1,2.5);
\draw[color=red] (0,0)--(-0.2,0.5)--(0.3,1)--(-0.2,1.5)--(0.6,2)--(1.1,2.5);
\end{tikzpicture}
\caption{Coupled zone.} If $x$ is in the coupled zone $K'^0_n$, and is reached by an open path starting from some point $(y,0) \in \Zd\times \{0\}$ (in blue), then $(0,0) \to (x,n)$ (in red).
\end{figure}
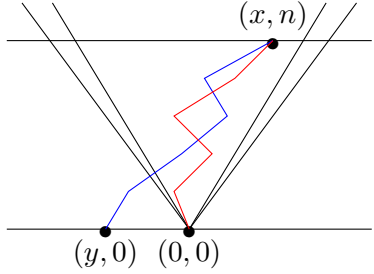

\subsection{Essential hitting times and associated translations}

We now introduce the analogues, in the discrete setting of oriented percolation, of the essential hitting times used by Garet--Marchand to study the supercritical contact process conditioned to survive in \cite{GM-contact} and  \cite{GM-contact-gd}; we give their main properties in Proposition \ref{PROP-sigma}. 

For a given $x \in \Zd$, the essential hitting time will be a random time $\sigma(x)$ such that
\begin{itemize}
\item $\Pbarre_p$ almost surely, $(0,0) \to (x, \sigma(x)) \to \infty$, 
\item the associated random translation of vector $(x, \sigma (x))$ leaves $\Pbarre_p$ invariant.
\end{itemize}
Thus $\sigma(x)$ will be interpreted as a regenerating time of the oriented percolation conditioned to percolate.

We define a set of oriented edges $\Edo$ of $\Zd$ in the following way: in $(\Zd,\Edo)$, there is an oriented edge between two points $z_1$ and $z_2$ in $\Zd$ if and only if  $\|z_1-z_2\|_1\le 1$.
The oriented edge in $\Eddo$  from $(z_1,n_1)$ to $(z_2,n_2)$ can be identified with the couple $((z_1,z_2),n_2)\in\Edo\times\N^*$. Thus, we identify $\Eddo$ and $\Edo\times\N^*$.
We also define, for $(y,h) \in \Zd \times \N$,  the translation $\theta_{(y,h)}$ on $\Omega$  by:
$$\theta_{(y,h)}((\omega_{(e,k)})_{e\in\Edo,k\ge 1})=(\omega_{(e+y,k+h)})_{e\in\Edo,k\ge 1}.$$
At some point, we will also need to look backwards in time. So, as set of sites, we replace $\Zd\times\N$ by  $\Zd\times\Z$, and we introduce the following reversed time translation defined on $\{0,1\}^{\Zd\times\Z}$ by
$$\theta^{\downarrow}_{(y,h)}((\omega_{(e,k)})_{e\in\Edo,k\in\Z})=(\omega_{(e+y,h-k)})_{e\in\Edo,k\in\Z}.$$
Fix $p > \pcdir(d+1)$. 

We now recall the construction of the essential hitting times   and the associated translations introduced in \cite{GM-contact}. Fix $x \in \Zd$. The essential hitting time $\sigma(x)$
 is defined through a family of stopping times as follows: we set 
 $u_0=v_0=0$
and we define recursively two increasing sequences of stopping times $(u_n)_{n \ge 0}$ and $(v_n)_{n \ge 0}$ with
$u_0=v_0< u_1< v_1< u_2\dots$ as follows:
\begin{itemize}
\item[$\bullet$] Assume that $v_k$ is defined. We set $u_{k+1}  =\inf\{t> v_k: \; x \in \xi^0_t \}$. \\
If $v_k<+\infty$, then $u_{k+1}$ is the first time after $v_k$ where $x$ is once again infected; otherwise, $u_{k+1}=+\infty$.
\item[$\bullet$] Assume that $u_k$ is defined, with $k \ge 1$. We set $v_k=u_k+\tau^0\circ \theta_{(x,u_k)}$.\\
If $u_k<+\infty$, the time $\tau^0\circ \theta_{(x,u_k)}$ is the length of the oriented percolation cluster starting from  $(x,u_k)$; otherwise, $v_k=+\infty$.
\end{itemize}
We then set
\begin{equation*}
%\label{definitiondeK}
K(x)=\min\{n\ge 0: \; v_{n}=+\infty \text{ or } u_{n+1}=+\infty\}.
\end{equation*}
This quantity represents the number of steps before the success of this process: either we stop because we have just found an infinite $v_n$, which corresponds to a time $u_n$ when $x$ is occupied and has infinite progeny, or we stop because we have just found an infinite $u_{n+1}$, which says that after $v_n$, site $0$ is never infected anymore. It is not difficult to see that 
$$\P_p(K(x)>n)\le\P_p(\tau^0<+\infty)^n,$$ and thus  $K(x)$ is $\P_p$ almost surely finite.  We define the essential hitting time $\sigma(x)$  by setting 
$$\sigma(x)=u_{K(x)} \in \N \cup \{+\infty\}.$$
By construction $(0,0)\to (x, \sigma(x)) \to +\infty$ on the event $\{\tau=+\infty\}$.
Note however that $\sigma(x)$ is not necessarily the first positive time when $x$ is occupied and has infinite progeny: for instance, such an event can occur between $u_1$ and $v_1$, being ignored by the recursive construction. It can be checked that  conditionally to the event $\{\tau^0=\infty\}$, the process necessarily stops because of an infinite $v_n$, and thus $\sigma(x)<+\infty$. 
At the same time, we define the operator $\tilde \theta$ on $\Omega$, which is a random translation, by:
\begin{equation*}
\tilde \theta_x(\omega) = 
\begin{cases} \theta_{(x,\sigma(x))} \omega & \text{if $\sigma(x)<+\infty$,}
\\
\omega &\text{otherwise.}
\end{cases}
\end{equation*}
If $(x_1,\dots,x_m)$ is a sequence of points in $\Zd$, we also introduce the shortened notation 
$$\tilde{\theta}_{x_1,\dots,x_m}=\tilde{\theta}_{x_m}\circ\tilde{\theta}_{x_m-1}\dots \circ \tilde{\theta}_{x_1}.$$

For $n\ge 1$, we denote by $\mathcal{F}_n$ the $\sigma$-field generated by the maps $(\omega\mapsto \omega_{(e,k)})_{e\in\Edo, 1\le k\le n}$. We denote by $\mathcal{F}$ the $\sigma$-field generated by the maps $(\omega\mapsto \omega_{(e,k)})_{e\in\Edo, k\ge 1}$.

\begin{prop}
\label{PROP-sigma}
Fix $p>\pcdir(d+1)$ and $x_1,\dots,x_m \in \Zd$.
\begin{enumerate}
\item[a.] Suppose $A\in\mathcal{B}(\R)$, $B\in\mathcal{F}$. Then for each $x\in\Zd$,
$$\Pbarre_p(\sigma(x)\in A, \tilde{\theta}_{x}^{-1}(B))=\Pbarre_p(\sigma(x)\in A)\Pbarre_p(B).$$
\item[b.] The probability measure $\Pbarre_p$ is invariant under  $\tilde{\theta}_{x_1,\dots,x_m}$.
\item[c.]  The random variables $\sigma(x_1),\sigma(x_2)\circ \tilde{\theta}_{x_1},\sigma(x_3)\circ \tilde{\theta}_{x_1,x_2},\dots,\sigma(x_m)\circ\tilde{\theta}_{x_1,\dots,x_{m-1}} $ are independent  under~$\Pbarre_p$.
\item[d.]  Suppose $t\le m$, $A\in\mathcal{F}_t$, $B\in\mathcal{F}$
$$\Pbarre_p(A, \tilde{\theta}_{x_1,\dots,x_m}^{-1}(B))=\Pbarre_p(A)\Pbarre_p(B).$$
\item[e.] For every $x\in\Zd$, $\displaystyle \mu_p(x)=\miniop{}{\lim}{n\to +\infty}\frac{\Ebarre_p(\sigma(nx))}n =\inf_{n \ge 1} \frac{\Ebarre_p(\sigma(nx))}n$.
\item[f.] There exists $\alpha,\beta>0$ such that 
$$\forall x \in \Zd \quad \Ebarre_p(\exp(\alpha \sigma(x))\le \exp(\beta(\|x\|_1\vee 1)).$$
\end{enumerate}
\end{prop}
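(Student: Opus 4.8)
The engine of the whole statement is the regeneration identity (a); parts (b)--(d) are essentially formal consequences of it, and (e)--(f) add the asymptotic shape of Proposition~\ref{PROP-GD}. To prove (a) one has to work around the fact that $\sigma(x)$ is \emph{not} a stopping time -- the tests $\{v_n=+\infty\}$ involve the whole future -- and the way out is to condition on the number $K(x)$ of unsuccessful attempts. On $\{\tau=+\infty\}$ one has $K(x)\ge1$ (using that $x\in\xi^0_t$ for infinitely many $t$, which follows from Proposition~\ref{PROP-GD}), the construction necessarily stops because some $v_n=+\infty$, and
$$\{\tau=+\infty\}\cap\{K(x)=n\}=F_n\cap\{(x,u_n)\to+\infty\},\qquad F_n:=\{K(x)\ge n\}\cap\{u_n<+\infty\},$$
where $F_n$ and the value of $u_n$ belong to the stopped $\sigma$-field of the genuine stopping time $u_n$ (each $u_k,v_k$ is a stopping time, $v_k$ being $u_k$ plus the extinction time of the cluster of $(x,u_k)$). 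On $\{K(x)=n\}$ one has $\tilde\theta_x=\theta_{(x,u_n)}$, so I would apply the strong Markov property at $u_n$: conditionally on $\mathcal F_{u_n}$ and $\{u_n<+\infty\}$, the configuration above level $u_n$ is a fresh copy of $\P_p$, on which $\{(x,u_n)\to+\infty\}$ and $\theta_{(x,u_n)}^{-1}(B)$ are read; since the spatial translation $\theta_{(x,0)}$ preserves $\P_p$ and maps $\{(x,0)\to+\infty\}$ to $\{(0,0)\to+\infty\}$, this produces exactly the factor $\Pbarre_p(B)$. Summing over $n$ and comparing with the same computation run with $B=\Omega$ to recognize $\Pbarre_p(\sigma(x)\in A)$ yields (a). The one genuinely delicate point is the displayed decomposition and the measurability of $F_n$.

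Parts (b) and (c) then follow by induction on $m$. Invariance under a single $\tilde\theta_x$ is (a) with $A=\R$, and the general case comes by peeling off translations one at a time. For independence, write $\tilde\theta_{x_1,\dots,x_m}=(\tilde\theta_{x_m}\circ\dots\circ\tilde\theta_{x_2})\circ\tilde\theta_{x_1}$ and note that $\sigma(x_2)\circ\tilde\theta_{x_1},\dots,\sigma(x_m)\circ\tilde\theta_{x_1,\dots,x_{m-1}}$ is, when read on $\tilde\theta_{x_1}\omega$, precisely the family attached to the shorter sequence $(x_2,\dots,x_m)$; by (a) this family (as a function of $\tilde\theta_{x_1}\omega$) is independent of $\sigma(x_1)$ and $\tilde\theta_{x_1}\omega$ has law $\Pbarre_p$, so the induction hypothesis applied to $(x_2,\dots,x_m)$, followed by (b) to restore the marginals, closes the argument.

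For (d), the heuristic is that on $\{\tau=+\infty\}$ each $\tilde\theta_{x_i}$ raises the time level by $\sigma(x_i)\circ(\text{preceding translations})\ge1$, so $\tilde\theta_{x_1,\dots,x_m}$ sends $\omega$ to $\theta_{(x_1+\dots+x_m,\,T_m)}\omega$ with $T_m\ge m\ge t$; thus an event of $\mathcal F_t$ and the pulled-back event $\tilde\theta_{x_1,\dots,x_m}^{-1}(B)$ read disjoint sets of edges and should be independent. I would make this rigorous with the same $K$-slicing plus strong-Markov scheme as in (a): when $t\le m-1$, peel off the last translation and invoke (d) for $m-1$ and $B'=\tilde\theta_{x_m}^{-1}(B)$; the remaining case $t=m$ is obtained by iterating the regeneration, each step ``eating'' one more level below $t$, so that after $m$ steps $A\in\mathcal F_t$ sits inside the relevant stopped $\sigma$-field. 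Keeping track of this through the successive conditionings is, I expect, the most error-prone part of the whole proof.

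Parts (e) and (f) bring in the shape theorem. For (f), write $\sigma(x)=u_1+\sum_{j=1}^{K(x)-1}(u_{j+1}-u_j)$: the first hitting time $u_1$ of $x$ has an exponential moment of order $e^{O(\|x\|_1\vee1)}$ by the large-deviation estimates of Proposition~\ref{PROP-GD} and the equivalence of norms on $\R^d$; $K(x)$ is dominated by a geometric variable; and each further increment is a finite-$\tau$ excursion (exponential tail by \eqref{EQ-tauexp}) followed by a re-infection delay whose exponential moment is bounded uniformly in $x$ -- a H\"older-type combination then gives the claimed bound. For (e), one has $\sigma(x)\ge t(x)$ (the true hitting time) while $\Ebarre_p t(nx)/n\to\mu_p(x)$ by Proposition~\ref{PROP-GD}; a matching upper bound follows from $\Ebarre_p(\sigma(nx)-t(nx))=o(n)$ (using the geometric/exponential controls above together with the fact that, once $nx$ lies in the coupled zone, it is re-infected within a time of bounded expectation), and the identity $\lim=\inf$ follows from a subadditivity estimate for $\Ebarre_p\sigma(nx)$ obtained by restarting the construction at the regeneration point $(nx,\sigma(nx))$ and using (a)--(b) to identify the law of the restarted piece.
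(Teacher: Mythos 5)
Your proof correctly reconstructs, in the discrete-time oriented-percolation setting, exactly the regeneration scheme (slicing by $K(x)$, strong Markov property at the genuine stopping times $u_n$, then induction over the sequence $x_1,\dots,x_m$ for the multi-point statements) and the moment/asymptotics estimates (decomposition of $\sigma$ into $u_1$ plus geometrically many excursion lengths, comparison with the true hitting time via the shape theorem) that the paper itself only cites from \cite{GM-contact} (Lemma~8, Corollary~9, Theorem~22) and \cite{GM-contact-gd} (Theorem~2). Since the paper's proof is a pointer to those references and you have filled in essentially the same argument, the approaches coincide.
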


\begin{proof}
To prove a.-d., it is sufficient to mimic the proofs of Lemma 8 and Corollary~9 in \cite{GM-contact}. 
The convergence~e. has been proved for the contact process in \cite{GM-contact}, Theorem~22. The existence of exponential moments for $\sigma$ has been proved for the contact process in \cite{GM-contact-gd}, Theorem~2. 
\end{proof}

\section{Directional limits along subsequences of regenerating times}
%%%%%%%%%%%%%%%%%%%%%%%%%%%%%%%%%%%%

The essential hitting times have good regenerating properties, but by construction (see Proposition \ref{PROP-sigma} e.), the vector $(x, \sigma(x))$ lies close to the border of the percolation cone $\{(y, \mu_p(y)):\; y \in \Rd\}$. We now need to build new regenerating points such that the set of directions of these points is dense inside the percolation cone.

We define, for $(y,h) \in \Zd \times \N^*$,  a new regenerating time $s(y,h)$  by setting 
$$s(y,h)=\sigma(y)+ \sum_{i=1}^h \sigma(0) \circ \tilde \theta^{i-1}(0) \circ \tilde \theta(y),$$
and the associated translation: 
\begin{equation*}
\hat \theta_{(y,h)}(\omega) = 
\begin{cases} \theta_{(y,s(y,h))} \omega & \text{if $s(y,h)<+\infty$,}
\\
\omega &\text{otherwise.}
\end{cases}
\end{equation*}
Note that on $\{\tau=+\infty\}$, $(0,0) \to (y,s(y,h)) \to +\infty$ and  $\hat \theta_{(y,h)}=\tilde{\theta}_{y,0,\dots,0}$ (with $h$ zeros). We can easily deduce from Proposition \ref{PROP-sigma} the following properties of the  time $s(y,h)$ under $\Pbarre_p$:

\begin{lemm}
\label{LEMM-sigma}
Fix $p>\pcdir(d+1)$, and  $(y,h) \in \Zd \times \N^*$.
\begin{enumerate}
\item[a.] The probability measure $\Pbarre_p$ is invariant under the translation $\hat \theta_{(y,h)}$.
\item[b.]  The random variables $(s(y,h) \circ (\hat \theta_{(y,h)})^j)_{j \ge 0}$ are independent and identically distributed under~$\Pbarre_p$.
\item[c.]  The measure-preserving dynamical system $(\Omega,\mathcal{F},\Pbarre_p,\hat \theta_{(y,h)})$ is mixing.
\item[d.]  There exists $\alpha,\beta>0$ such that 
$$\forall y \in \Zd \quad \forall h \in \N^* \quad \Ebarre_p(\exp(\alpha s(y,h)))\le \exp(\beta((\|y\|_1\vee 1)+h)).$$
\end{enumerate}
\end{lemm}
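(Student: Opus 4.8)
The plan is to derive Lemma~\ref{LEMM-sigma} directly from Proposition~\ref{PROP-sigma} via the identity $\hat\theta_{(y,h)}=\tilde\theta_{y,0,\dots,0}$ (with $h$ zeros), which holds on the percolation event $\{\tau=+\infty\}$, i.e. $\Pbarre_p$-almost surely. The point is that $s(y,h)$ is exactly the cumulative displacement in the time coordinate produced by the composition $\tilde\theta_{y,0,\dots,0}$: the first translation $\tilde\theta_y$ shifts time by $\sigma(y)$, and each of the following $h$ translations by $0$ shifts time by $\sigma(0)$ evaluated in the already-translated environment, which is precisely the term $\sigma(0)\circ\tilde\theta^{i-1}(0)\circ\tilde\theta(y)$ in the definition of $s(y,h)$. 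So proving the lemma is a matter of transporting the five statements of Proposition~\ref{PROP-sigma} through this identification.

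For part~a., invariance of $\Pbarre_p$ under $\hat\theta_{(y,h)}$ is the special case $m=h+1$, $(x_1,\dots,x_m)=(y,0,\dots,0)$ of Proposition~\ref{PROP-sigma}~b. For part~b., I would argue that the sequence $(s(y,h)\circ\hat\theta_{(y,h)}^j)_{j\ge 0}$ is stationary by part~a., and that it is i.i.d.\ by a standard iteration of the independence-and-factorization statement in Proposition~\ref{PROP-sigma}~d.: the variable $s(y,h)$ is $\mathcal F_t$-measurable for $t=s(y,h)$, but more usefully one uses that $s(y,h)\circ\hat\theta_{(y,h)}^j$ depends on the configuration only ``between regeneration levels $j$ and $j+1$'', so that Proposition~\ref{PROP-sigma}~c. (independence of the successively translated $\sigma$'s) applied to the concatenated sequence of points $(y,0,\dots,0,y,0,\dots,0,\dots)$ gives mutual independence of the blocks, hence of the $s$'s; identical distribution is then part~a. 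For part~c., mixing of $(\Omega,\mathcal F,\Pbarre_p,\hat\theta_{(y,h)})$: I would show $\Pbarre_p(A\cap\hat\theta_{(y,h)}^{-n}(B))\to\Pbarre_p(A)\Pbarre_p(B)$ first for $A\in\mathcal F_t$ (with $t$ fixed) using Proposition~\ref{PROP-sigma}~d., since for $n$ large enough the iterated translation $\hat\theta_{(y,h)}^n=\tilde\theta_{y,0,\dots,0}$ (with $n(h+1)$ points) satisfies the hypothesis $t\le m=n(h+1)$, giving exact factorization $\Pbarre_p(A\cap\hat\theta_{(y,h)}^{-n}(B))=\Pbarre_p(A)\Pbarre_p(B)$; then extend to general $A\in\mathcal F$ by approximating $A$ by $\mathcal F_t$-measurable sets and a routine $\varepsilon/3$ argument.

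For part~d., the exponential moment bound: write $s(y,h)=\sigma(y)+\sum_{i=1}^h \sigma(0)\circ(\text{successive translations})$, and by Proposition~\ref{PROP-sigma}~c. applied to the list $(y,0,\dots,0)$ these $h+1$ summands are independent under $\Pbarre_p$. Hence $\Ebarre_p(\exp(\alpha s(y,h)))$ factors as $\Ebarre_p(\exp(\alpha\sigma(y)))\prod_{i=1}^h\Ebarre_p(\exp(\alpha\sigma(0)))$, because by invariance (part~b. of Proposition~\ref{PROP-sigma}) each translated copy of $\sigma(0)$ has the same law as $\sigma(0)$. Using Proposition~\ref{PROP-sigma}~f., $\Ebarre_p(\exp(\alpha\sigma(y)))\le\exp(\beta(\|y\|_1\vee 1))$ and $\Ebarre_p(\exp(\alpha\sigma(0)))\le\exp(\beta)$, so the product is at most $\exp(\beta(\|y\|_1\vee 1)+\beta h)=\exp(\beta((\|y\|_1\vee 1)+h))$, which is the claimed bound with the same constants $\alpha,\beta$.

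The main obstacle, and the only step requiring genuine care rather than bookkeeping, is part~b. (and relatedly part~c.): one must be sure that $s(y,h)\circ\hat\theta_{(y,h)}^j$, for different $j$, really is a function of disjoint ``slabs'' of the environment so that Proposition~\ref{PROP-sigma}~c.—which is stated for a single finite list of points—can be legitimately invoked for the doubly-indexed concatenation, and that the almost-sure identity $\hat\theta_{(y,h)}=\tilde\theta_{y,0,\dots,0}$ propagates correctly under iteration (i.e.\ that $\hat\theta_{(y,h)}^j=\tilde\theta_{y,0,\dots,0,\dots,y,0,\dots,0}$ with the list repeated $j$ times, again $\Pbarre_p$-a.s.). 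Once this identification is nailed down, everything reduces to Proposition~\ref{PROP-sigma}, exactly as the construction in \cite{GM-contact} does for the contact process; I would therefore mirror the corresponding arguments there, which is presumably what the authors intend.
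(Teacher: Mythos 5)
Your proof is correct and follows exactly the route the paper intends: the authors state that Lemma~\ref{LEMM-sigma} is "easily deduced" from Proposition~\ref{PROP-sigma} via the identification $\hat\theta_{(y,h)}=\tilde\theta_{y,0,\dots,0}$, and your argument fills in precisely those deductions (a.\ from invariance, b.\ and c.\ from the independence and factorization statements c.--d., and d.\ from the exponential moment bound f.\ combined with independence). Nothing is missing and the bookkeeping is sound.
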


We fix $(y,h) \in \Zd\times \N^*$. We work under $\Pbarre_p$, and we set, for every $n \ge 1$,
\begin{eqnarray}
S_n & = & S_n(y,h)=\sum_{k=0}^{n-1} s(y,h)\circ \hat \theta_{(y,h)}^k. \label{DEFI-Sn} 
\end{eqnarray}
The points $(ny,S_n(y,h))_{n \ge 1}$ are the sequence of regenerating points associated to  $(y,h)$ along which we are going to look for subadditivity properties. As, under $\Pbarre_p$, the random variables $(s(y,h) \circ \hat \theta_{(y,h)}^j)_{j \ge 0}$ are independent and identically distributed  with finite first moment (see Lemma \ref{LEMM-sigma}), the strong law of large numbers ensures that $\Pbarre_p$-almost surely
\begin{equation}
\label{EQ-LGNsigma}
\lim_{n \to +\infty} \frac{S_n(y,h)}{n} =\Ebarre_p(s(y,h))=\Ebarre_p(\sigma(y))+h \Ebarre_p(\sigma(0)).
\end{equation}
Thus, for large $n$, the point $(ny,S_n(y,h))$ is not far from the line $\R(y, \Ebarre_p(s(y,h)))$. 

To obtain directional limits along subsequences, we first apply Kingman's subadditive ergodic theorem to $f_n=-\log N_{(ny, S_n(y,h))}$ for a fixed $(y,h) \in \Zd \times \N^*$.
\begin{lemm} 
\label{LEMM-SsAdd}
Fix $p>\pcdir(d+1)$ and $(y,h) \in \Zd\times \N^*$. 
There exists $\alpha_p(y,h) \in(0, \log(2d+1)]$ such that $\Pbarre_p$-almost surely and in $L^1(\Pbarre_p)$, 
$$\lim_{n \to +\infty}\frac{1}{S_n(y,h)} \log N_{(ny,S_n(y,h))} 
%=\lim_{n \to +\infty}\frac{1}{S_n(y,h)} \log \overline{N}_{(ny,S_n(y,h))} 
= \alpha_p(y,h).$$
\end{lemm}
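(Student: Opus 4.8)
The plan is to apply Kingman's subadditive ergodic theorem to the doubly-indexed family $f_{m,n} = -\log N_{(my,S_m)\to(ny,S_n)}$, where for $m<n$ we denote by $N_{(my,S_m)\to(ny,S_n)}$ the number of open paths from $(my,S_m(y,h))$ to $(ny,S_n(y,h))$ of length exactly $S_n-S_m$, so that $f_{0,n}=-\log N_{(ny,S_n(y,h))}$. First I would check \emph{superadditivity} of $n\mapsto N_{(0,0)\to(ny,S_n)}$: since concatenating an open path from $(0,0)$ to $(my,S_m)$ with one from $(my,S_m)$ to $(ny,S_n)$ gives an open path from $(0,0)$ to $(ny,S_n)$ of the right length, we get $N_{(0,0)\to(ny,S_n)}\ge N_{(0,0)\to(my,S_m)}\cdot N_{(my,S_m)\to(ny,S_n)}$, hence $f_{0,n}\le f_{0,m}+f_{m,n}$. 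Stationarity and the distributional identity $f_{m,n}\stackrel{d}{=}f_{0,n-m}\circ\hat\theta_{(y,h)}^m$ follow from Lemma \ref{LEMM-sigma}: the translation $\hat\theta_{(y,h)}$ preserves $\Pbarre_p$ and is mixing (in particular ergodic), and $S_n-S_m = S_{n-m}\circ\hat\theta_{(y,h)}^m$ together with the translation covariance of the percolation configuration gives the needed identities for the increments.

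The integrability hypothesis of Kingman's theorem is the delicate point, and it splits into an upper and a lower bound on $\log N_{(ny,S_n(y,h))}$. For the \emph{upper bound}, the total number of paths of length $\ell$ from a fixed site is $(2d+1)^\ell$, so $\log N_{(ny,S_n)}\le S_n\log(2d+1)$; combined with $\Ebarre_p(S_n)=n\,\Ebarre_p(s(y,h))<\infty$ (finiteness from Lemma \ref{LEMM-sigma}.d) this controls $\Ebarre_p(f_{0,n}^-)$ and gives $\Ebarre_p(f_{0,n})\ge -n\log(2d+1)\,\Ebarre_p(s(y,h))>-\infty$. For the \emph{lower bound} — i.e. $\Ebarre_p(f_{0,n}^+)<\infty$, equivalently $\Ebarre_p(\log^- N_{(ny,S_n)})<\infty$ — the key is that by construction of $s(y,h)$ and the essential hitting times, $\Pbarre_p$-almost surely $(0,0)\to(ny,S_n(y,h))$, so $N_{(ny,S_n)}\ge 1$ and $\log N_{(ny,S_n)}\ge 0$; hence $f_{0,n}\le 0$ and $\Ebarre_p(f_{0,n}^+)=0$. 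Thus both halves of Kingman's integrability condition hold, and the theorem yields a constant $\gamma_p(y,h)=\lim_n f_{0,n}/n=\inf_n \Ebarre_p(f_{0,n})/n\in[-\infty,0]$ such that $-\tfrac1n\log N_{(ny,S_n(y,h))}\to\gamma_p(y,h)$ almost surely and in $L^1$; here $\gamma_p(y,h)>-\infty$ by the upper bound above, and setting $\alpha_p(y,h)=-\gamma_p(y,h)/\Ebarre_p(s(y,h))$ and dividing by $S_n/n\to\Ebarre_p(s(y,h))$ (from \eqref{EQ-LGNsigma}) converts this into the stated convergence of $\tfrac1{S_n}\log N_{(ny,S_n(y,h))}$.

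It remains to pin down that $\alpha_p(y,h)\in(0,\log(2d+1)]$. The upper bound $\alpha_p(y,h)\le\log(2d+1)$ is immediate from $N_{(ny,S_n)}\le(2d+1)^{S_n}$ divided through by $S_n$. For strict positivity $\alpha_p(y,h)>0$ one cannot argue pathwise; instead I would use a second-moment / restart argument. The idea is to show $\Ebarre_p(\log N_{(ny,S_n(y,h))})\ge cn$ for some $c>0$ and all large $n$: along the regeneration structure, between consecutive points $(ky,S_k)$ and $((k+1)y,S_{k+1})$ the configuration restarts under $\Pbarre_p$, and on an event of positive probability (independent of $k$) there are at least two distinct open paths realizing the passage — e.g. by exhibiting, inside a block of bounded size near a regeneration point, two disjoint open connections using the coupled-zone estimates of Proposition \ref{PROP-GD} and the fact that $p>\pcdir(d+1)$. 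A standard branching-type lower bound then gives $\log N_{(ny,S_n)}\succeq$ (number of ``doubling'' regenerations among the first $n$), which is $\ge cn$ in expectation by the law of large numbers for i.i.d. indicators; since the limit equals $\inf_n\Ebarre_p(-f_{0,n})/(n\,\Ebarre_p(s(y,h)))$ up to sign this forces $\alpha_p(y,h)\ge c/\Ebarre_p(s(y,h))>0$. I expect this strict-positivity step to be the main obstacle, since it requires producing genuinely multiple open paths at the regeneration scale rather than a single one; everything else is a fairly routine verification of Kingman's hypotheses.
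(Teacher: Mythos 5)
Your Kingman framework is the same as the paper's: you set $f_n=-\log N_{(ny,S_n)}$, get subadditivity $f_{n+p}\le f_n+f_p\circ\hat\theta_{(y,h)}^n$ by concatenating paths at the regenerating point $(ny,S_n)$, verify integrability from $0\le\log N_{(ny,S_n)}\le S_n\log(2d+1)$ and the integrability of $S_n$, invoke ergodicity of $\hat\theta_{(y,h)}$ (from mixing, Lemma~\ref{LEMM-sigma}.c) to get a deterministic limit, and finally convert from a limit of $\frac1n\log N_{(ny,S_n)}$ to a limit of $\frac1{S_n}\log N_{(ny,S_n)}$ using $S_n/n\to\Ebarre_p(s(y,h))$. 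All of that matches the paper and is fine.

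Where you go astray is the strict positivity, and the error is a sign: you write that the limit equals ``$\inf_n\Ebarre_p(-f_{0,n})/(n\,\Ebarre_p(s(y,h)))$'', but Kingman's theorem for the \emph{subadditive} sequence $f_n$ gives $\lim f_n/n=\inf_n\Ebarre_p(f_n)/n$, so for $-f_n=\log N_{(ny,S_n)}$ the limit is the \emph{supremum} $\sup_n\Ebarre_p(\log N_{(ny,S_n)})/n$. Once this is noted, there is no need to establish linear growth $\Ebarre_p(\log N_{(ny,S_n)})\ge cn$ for all large $n$ by a branching/second-moment/restart construction: a single favorable $n$ suffices, and the paper simply takes $n=1$. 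Since $N_{(y,S_1)}\ge 1$ $\Pbarre_p$-a.s. (by construction of the regeneration) and $N_{(y,S_1)}\ge 2$ on an event of positive $\Pbarre_p$-probability (a fixed local configuration), one gets $\Ebarre_p(\log N_{(y,S_1)})>0$ directly, hence $\alpha'_p(y,h)\ge\Ebarre_p(-f_1)>0$ and so $\alpha_p(y,h)>0$. Your branching argument would presumably work, but it is much heavier than necessary, and you described it as ``the main obstacle'' precisely because of the inf/sup mix-up; the paper's observation dissolves the obstacle. The rest of your proposal, including $\alpha_p(y,h)\le\log(2d+1)$ from the trivial bound on the number of paths, is correct and identical to the paper.
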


\begin{proof} 
Fix $(y,h) \in \Zd\times \N^*$. To avoid heavy notations, we omit all the dependence in $(y,h)$. For instance $S_n=S_n(y,h)$ and $\hat{\theta} =\hat{\theta}_{(y,h)}$.
Note that by definition, $\Pbarre_p$-almost surely, for every $n \ge 1$, $(0,0) \to (ny,S_n)\to +\infty$ and consequently,
$N_{(ny,S_n)} \ge 1$. 
For
$n \ge 1$, we set 
$$f_n= -\log N_{(ny,S_n)}.$$
Let $n,p \ge 1$. Note that $S_n+S_p \circ \hat{\theta}_{(y,h)}^n=S_{n+p}$.
As $N_{(py,S_p)}\circ \hat{\theta}^n$ counts the number of open paths from $(ny,S_n)$ to $((n+p)y, S_n+{S_p}\circ \hat{\theta}^n)$, concatenation of paths ensures that 
$N_{(ny,S_n)} \times N_{(py,S_p)}\circ \hat{\theta}^n \le N_{((n+p)y,S_{n+p})}$
which implies that 
$$\forall n,p \ge 1 \quad f_{n+p}\le f_n+f_p \circ \hat{\theta}^n.$$
As $1 \le N_{(ny,S_n)} \le (2d+1)^{S_n}$, 
$$-S_n\log(2d+1)\le f_n\le 0.$$
The integrability of $s$ thus implies the integrability of every $f_n$. So we can apply Kingman's subadditive ergodic theorem.
By property c. in Lemma~\ref{LEMM-sigma}, the dynamical system $(\Omega,\mathcal{F},\Pbarre,\hat{\theta})$ is mixing. Particularly, it is ergodic, so the limit is deterministic:   if we define
$$-\alpha'_p(y,h)=\miniop{}{\inf}{n\ge 1}\frac{\Ebarre_p(f_n)}{n},$$ we have  $\Pbarre_p$-almost surely and in $L^1(\Pbarre_p)$: $\displaystyle\lim_{ n \to+\infty} \frac{f_n}{n} =-\alpha'_p(y,h)$.\\
The limit of the lemma follows then directly from \eqref{EQ-LGNsigma} by setting
$$\alpha_p(y,h)=\frac{\alpha'_p(y,h)}{\Ebarre_ps(y,h)}.$$
Finally $\alpha'_p(y,h)\ge \Ebarre_p(-f_1)=\Ebarre_p (\log {N}_{(y,S_1)})$.
Since ${N}_{(y,S_1)}\ge 1$ $\Pbarre_p$-a.s. and ${N}_{(y,S_1)}\ge 2$ with positive probability, it follows that $\alpha'_p(y,h)>0$, and consequently $\alpha_p(y,h)>0$. \\
As $N_{(ny,S_n)} \le (2d+1)^{S_n}$, we see that $\alpha_p(y,h) \le \log (2d+1)$ and that the convergence also holds in $L^1(\Pbarre_p)$.
\end{proof}

We can now introduce a natural candidate for the limit in Theorem \ref{THEO-lebotheoun}:
\begin{equation}
\label{DEFI-alphap}
\alpha_p=\sup \left\{ \alpha_p(y,h): \; (y,h) \in \Zd\times \N^* \right\}<+\infty.
\end{equation}
Indeed, at the logarithmic scale we are working with, we can expect that the dominant contribution to the number $N_n$ of open paths to level $n$ will be due to the number $N_{nz,n}$ of open paths to level $n$ in the direction $(z,1)$ that optimizes the previous limit. Note however that  in our construction, $(y,h)$ has no real geometrical signification, but it is just a useful encoding: as said before, the asymptotic direction of the regenerating point $(ny,S_n(y,h))$ in  $\Zd\times \N$  is  $$\displaystyle \left( \frac{y}{\Ebarre_p(s(y,h)}, 1 \right).$$
To skip from the subsequences to the full limit, we approximate $B_{\mu_p}(0,1)$ with a denumerable set of points: let
\begin{equation}
\label{E:defiDp}
D_p=\left\{ \frac{y}{\Ebarre_p(s(y,h))}: \; y \in \Zd, h \in \N^*\right\}.
\end{equation}

\begin{lemm}
\label{LEMM-dirdense}
For every $p>\pcdir(d+1)$, $\displaystyle B_{\mu_p}(0,1)\subset\overline{ D_p}$ .
\end{lemm}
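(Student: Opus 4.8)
The statement to prove is that $B_{\mu_p}(0,1)\subset\overline{D_p}$, where $D_p=\{y/\Ebarre_p(s(y,h)):\ y\in\Zd,\ h\in\N^*\}$ and $\Ebarre_p(s(y,h))=\Ebarre_p(\sigma(y))+h\,\Ebarre_p(\sigma(0))$. Since $\overline{D_p}$ is closed and $B_{\mu_p}(0,1)$ is the closed unit ball of the norm $\mu_p$, it suffices to show that every point of the open ball $\mathring B_{\mu_p}(0,1)$ — or even a dense subset of it — lies in $\overline{D_p}$; the boundary is then recovered by taking closures. In fact, since $\Qd$ is dense in $\Rd$ and $\mu_p$ is a norm, it is enough to show that for each $x\in\Qd$ with $\mu_p(x)<1$, the point $x$ belongs to $\overline{D_p}$.

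The plan is as follows. First I would reduce to rational directions: write $x=y/q$ with $y\in\Zd$ and $q\in\N^*$, so that $\mu_p(y)<q$. For an integer $N\ge 1$, consider the lattice point $Ny\in\Zd$ and look for an $h=h(N)\in\N^*$ such that $Ny/\Ebarre_p(s(Ny,h))$ is close to $x=y/q$; equivalently, such that $\Ebarre_p(s(Ny,h))\approx Nq$. Using the formula $\Ebarre_p(s(Ny,h))=\Ebarre_p(\sigma(Ny))+h\,\Ebarre_p(\sigma(0))$, and noting $\Ebarre_p(\sigma(0))>0$ (it is a finite positive constant by Proposition~\ref{PROP-sigma}, parts e.\ and f., since $\sigma(0)\ge 1$ a.s.\ under $\Pbarre_p$), I can solve approximately for $h$:
$$
h(N)=\left\lceil \frac{Nq-\Ebarre_p(\sigma(Ny))}{\Ebarre_p(\sigma(0))}\right\rceil .
$$
For this to give a valid $h(N)\in\N^*$, I need $Nq>\Ebarre_p(\sigma(Ny))$ for $N$ large; this is exactly where the bound $\mu_p(y)<q$ enters. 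Indeed, by Proposition~\ref{PROP-sigma}~e., $\Ebarre_p(\sigma(Ny))/N\to\mu_p(y)<q$, so $\Ebarre_p(\sigma(Ny))=N\mu_p(y)+o(N)\le Nq'$ for any $q'\in(\mu_p(y),q)$ and $N$ large; in particular $Nq-\Ebarre_p(\sigma(Ny))\ge N(q-q')\to+\infty$, so $h(N)$ is a positive integer for all large $N$.

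With this choice, $\Ebarre_p(s(Ny,h(N)))=\Ebarre_p(\sigma(Ny))+h(N)\Ebarre_p(\sigma(0))=Nq+O(1)$, the $O(1)$ coming from the ceiling (at most $\Ebarre_p(\sigma(0))$) — note the $\Ebarre_p(\sigma(Ny))$ term cancels exactly by design. Hence
$$
\frac{Ny}{\Ebarre_p(s(Ny,h(N)))}=\frac{Ny}{Nq+O(1)}=\frac{y}{q}\cdot\frac{1}{1+O(1/N)}\longrightarrow \frac{y}{q}=x,
$$
which shows $x\in\overline{D_p}$. Since such $x$ are dense in $\mathring B_{\mu_p}(0,1)$ and $\overline{D_p}$ is closed, we get $\mathring B_{\mu_p}(0,1)\subset\overline{D_p}$, and taking closures, $B_{\mu_p}(0,1)=\overline{\mathring B_{\mu_p}(0,1)}\subset\overline{D_p}$.

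The only genuinely delicate point is controlling $\Ebarre_p(\sigma(Ny))$ precisely enough: one needs both that it is finite (so $s(Ny,h)$ has finite mean and the construction makes sense) and that $\Ebarre_p(\sigma(Ny))/N$ converges to $\mu_p(y)$, which is strictly less than $q$. Both facts are supplied by Proposition~\ref{PROP-sigma} (finiteness of exponential moments in f., and the limit in e.), so there is no real obstacle beyond assembling these estimates carefully; the argument is otherwise a routine approximation. One should also double-check the edge case $y=0_{\Zd}$: there $\mu_p(0)=0<q$ trivially, $\Ebarre_p(\sigma(0))$ is a positive constant, and $0/\Ebarre_p(s(0,h))=0\in D_p$, so $0\in\overline{D_p}$ directly.
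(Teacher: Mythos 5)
Your proof is correct and follows essentially the same strategy as the paper: reduce to rational directions $y/q$ with $\mu_p(y)<q$ (dense in the ball), then for the scaled points $Ny$ choose $h(N)$ so that $\Ebarre_p(s(Ny,h(N)))\approx Nq$, using Proposition~\ref{PROP-sigma}~e.\ (namely $\Ebarre_p(\sigma(Ny))/N\to\mu_p(y)<q$) and f.\ to justify finiteness and positivity of $h(N)$. The only cosmetic difference is the explicit choice of $h$: the paper takes $h_n=\lceil n(l-\mu_p(z))/\Ebarre_p(\sigma(0))\rceil$, which is deterministic and uses $\mu_p(z)$ directly so the cancellation is only asymptotic, whereas you take $h(N)=\lceil (Nq-\Ebarre_p(\sigma(Ny)))/\Ebarre_p(\sigma(0))\rceil$, making the $\Ebarre_p(\sigma(Ny))$ term cancel exactly; both variants are valid and rely on the same two ingredients from Proposition~\ref{PROP-sigma}.
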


\begin{proof}
Note that the set $\{z/l:\; (z,l)\in \Zd \times \N^* \text{ and }\mu_p(z) <l\}$ is dense in $B_{\mu_p}(0,1)$.
Thus fix $(z,l) \in \Zd \times \N^*$ such that $\mu_p(z) <l$ and consider
$$(y_n,h_n)=\left( nz, \left \lceil \frac{n(l-\mu_p(z)}{\Ebarre_p(\sigma(0))}  \right \rceil \right) \in \Zd \times \N^*.$$
Then 
\begin{eqnarray*}
\frac{y_n}{\Ebarre_p(s(y_n,h_n))} & = & \frac{nz}{\Ebarre_p(\sigma(y_n))+h_n\Ebarre_p(\sigma(0))} 
% =  \frac{nz}{\Ebarre_p(\sigma(nz))+nl-n\mu_p(z)} 
 \to \frac{z}{l}
\end{eqnarray*}
as $n$ goes to $+\infty$.
\end{proof}

Finally,  for $(y,h) \in \Zd\times \N^*$, we denote by
\begin{align}
\label{levarphi}
\forall n \in \N \quad \varphi(n)=\varphi_{(y,h)}(n)=\inf\{k\in \N: \; S_k(y,h) \ge n\}.
\end{align}
Thus, for large $n$, $(\varphi(n).y, S_{\varphi(n)})$ is the first point among the sequence of regenerating points associated to $(y,h)$ to be above level $n$. By the renewal theory, $\Pbarre_p$ almost surely,
\begin{equation}
\label{EQ-LGNphi}
\displaystyle \lim_{n \to +\infty} \frac{ \varphi_{(y,h)}(n)}{n}=\frac{1}{\Ebarre_p(s(y,h))} \text{ and } 
\displaystyle \lim_{n \to +\infty} \frac{ S_{\varphi_{(y,h)}(n)}(y,h)}{n}=1.
\end{equation}
It is also not too far above level $n$:
\begin{lemm}
\label{LEMM-sautemouton}
For every $(y,h) \in \Zd \times \N^*$, there exist positive constants $A,B$ such that 
$$\forall n \in \N \quad \Pbarre(S_{\varphi_{(y,h)}(n)}-n \ge n)\le A\exp(-Bn).$$
\end{lemm}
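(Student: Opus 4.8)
The plan is to read this as a renewal‑type overshoot estimate for the i.i.d.\ partial sums $S_n=S_n(y,h)$. Write $X_j:=s(y,h)\circ\hat\theta_{(y,h)}^{\,j-1}$ for $j\ge 1$, so that $S_n=X_1+\dots+X_n$; by Lemma~\ref{LEMM-sigma}(b) the $X_j$ are i.i.d., each distributed as $s(y,h)$, non‑negative, with finite exponential moment $C_{y,h}:=\Ebarre_p\big(\exp(\alpha s(y,h))\big)<\infty$ (Lemma~\ref{LEMM-sigma}(d)) and mean $m:=\Ebarre_p(s(y,h))\in(0,\infty)$. Abbreviate $\varphi(n)=\varphi_{(y,h)}(n)$. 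For $n\ge 1$ we have $\varphi(n)\ge 1$ (since $S_0=0<n$) and $S_{\varphi(n)-1}<n$ by definition of $\varphi(n)$; hence on the event $\{S_{\varphi(n)}-n\ge n\}=\{S_{\varphi(n)}\ge 2n\}$ the last increment must be large: $X_{\varphi(n)}=S_{\varphi(n)}-S_{\varphi(n)-1}>n$. (The case $n=0$ is handled by taking $A\ge 1$.)

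First I would decompose according to the value of $\varphi(n)$. On $\{\varphi(n)=k\}$ one has $S_{k-1}<n$, so
$$\{S_{\varphi(n)}-n\ge n\}\ \subset\ \bigcup_{k\ge 1}\big(\{S_{k-1}<n\}\cap\{X_k>n\}\big).$$
Since $S_{k-1}$ is a function of $X_1,\dots,X_{k-1}$ only, it is independent of $X_k$, so a union bound gives
$$\Pbarre_p\big(S_{\varphi(n)}-n\ge n\big)\ \le\ \sum_{k\ge 1}\Pbarre_p(S_{k-1}<n)\,\Pbarre_p(X_k>n)\ =\ \Pbarre_p\big(s(y,h)>n\big)\sum_{j\ge 0}\Pbarre_p(S_j<n).$$
By Markov's inequality and Lemma~\ref{LEMM-sigma}(d), $\Pbarre_p(s(y,h)>n)\le C_{y,h}\,e^{-\alpha n}$. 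Moreover, since $(S_j)_{j\ge 0}$ is non‑decreasing, $\#\{j\ge 0:\ S_j<n\}=\varphi(n)$, so $\sum_{j\ge 0}\Pbarre_p(S_j<n)=\Ebarre_p[\varphi(n)]$.

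It remains to check that $\Ebarre_p[\varphi(n)]$ grows at most linearly in $n$. Here $\Pbarre_p(\varphi(n)>k)=\Pbarre_p(S_k<n)$, and for $k\ge\lceil 2n/m\rceil$ we have $n\le mk/2$, hence $\Pbarre_p(S_k<n)\le\Pbarre_p(S_k\le mk/2)\le e^{-c k}$ for some $c=c(y,h)>0$, by the lower‑deviation Cramér bound for the i.i.d.\ sum $S_k$ (the exponential moment guarantees a positive rate at $m/2<m$; if $m/2$ lies below the essential infimum of $s(y,h)$ this probability is simply $0$). Summing over $k$ yields $\Ebarre_p[\varphi(n)]\le 2n/m+O(1)$. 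Combining the three estimates,
$$\Pbarre_p\big(S_{\varphi(n)}-n\ge n\big)\ \le\ C_{y,h}\big(2n/m+O(1)\big)\,e^{-\alpha n},$$
and absorbing the polynomial prefactor into the exponential (and enlarging the constant so the bound is trivial at $n=0$) gives the claimed $A e^{-Bn}$.

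The only point that needs a little care is the affine bound on $\Ebarre_p[\varphi(n)]$: the increments $s(y,h)$ are not bounded away from $0$ (e.g.\ $\sigma(0)$ can vanish), so one cannot naively write $\varphi(n)\le n/\delta$; the Cramér lower‑deviation estimate, available precisely because of the exponential moments in Lemma~\ref{LEMM-sigma}(d), is what makes the argument go through. Everything else is routine bookkeeping with the i.i.d.\ structure from Lemma~\ref{LEMM-sigma}(b).
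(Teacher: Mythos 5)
Your proof is correct, but it takes a detour the paper avoids by a simple observation. The paper's proof is one line: since we are in discrete time, each increment $s(y,h)\circ\hat\theta^k_{(y,h)}\ge 1$, hence $S_k\ge k$ and $\varphi_{(y,h)}(n)\le n$ \emph{deterministically}; then $\{S_{\varphi(n)}-n\ge n\}$ forces one of the first $n$ increments to be $\ge n$, and a crude union bound gives $n\,\Pbarre_p(s(y,h)\ge n)$, which is handled by the exponential moment and Markov. You instead decompose over all $k\ge 1$, identify $\sum_{j\ge 0}\Pbarre_p(S_j<n)=\Ebarre_p[\varphi(n)]$, and invoke a Cram\'er lower--deviation estimate for the i.i.d.\ sums $S_k$ to get $\Ebarre_p[\varphi(n)]=O(n)$. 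That is a valid renewal-theory route and is more robust (it would go through for unbounded-below increments or in continuous time), but it calls on heavier machinery than needed here.

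One factual quibble with your closing remark: you write that ``$\sigma(0)$ can vanish'' so that $\varphi(n)\le n/\delta$ is unavailable. In fact $\sigma(0)\ge 1$ $\Pbarre_p$-a.s.: the recursion sets $u_1=\inf\{t>v_0:\,0\in\xi^0_t\}$ with $v_0=0$, so $u_1\ge 1$, and conditionally on $\{\tau=\infty\}$ the recursion stops with $K(0)\ge 1$ because of an infinite $v_n$, hence $\sigma(0)=u_{K(0)}\ge u_1\ge 1$. Thus $s(y,h)\ge h\ge 1$ and $\varphi(n)\le n$, which is exactly the trivial bound the paper exploits. Your proof stands regardless; it just did extra work to dodge a problem that is not there.
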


\begin{proof}
As we work in discrete time, $\varphi(n) \le n$. So 
\begin{eqnarray*}
\Pbarre_p(S_{\varphi(n)}-n \ge n) 
& \le & \Pbarre_p(\exists k \le n: \; s(y,h)\circ \hat \theta_{(y,h)}^k \ge n) \le n \Pbarre_p(s(y,h) \ge n).
\end{eqnarray*}
As $s(y,h)$ admits exponential moments thanks to Lemma \ref{LEMM-sigma}, we can conclude with the Markov inequality.
\end{proof}

\section{Proof of Theorem \ref{THEO-lebotheoun}}
%%%%%%%%%%%%%%%%%%%%%%%%%%%%%%%%%%%%%%%%%%%%%%%%%%%%%%%%%%%%%%%%%

Fix $p>\pcdir(d+1)$. The proof of the almost sure convergence in Theorem \ref{THEO-lebotheoun} is a direct consequence of the forthcoming Lemmas \ref{LEMM-liminf}, \ref{LEMM-limsup} and \ref{LEMM-pareil}. The $L^1$ convergence follows from the remark that $\frac{1}{n} \log  N_n \le \log(2d+1)$.
Remember that $\alpha_p$ is defined in \eqref{DEFI-alphap}.

\begin{lemm} $\Pbarre_p$-almost surely, 
\label{LEMM-liminf}
$\displaystyle \miniop{}{\liminf}{n\to +\infty} \frac{1}{n} \log \overline N_n \ge \alpha_p.$
\end{lemm}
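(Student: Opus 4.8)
The goal is to show that $\liminf_n \frac1n \log \overline{N}_n \ge \alpha_p$ almost surely under $\Pbarre_p$, where $\alpha_p = \sup_{(y,h)} \alpha_p(y,h)$. Since the supremum is over a countable set, it suffices to fix one pair $(y,h) \in \Zd \times \N^*$ and show $\liminf_n \frac1n \log \overline{N}_n \ge \alpha_p(y,h)$ almost surely; then intersecting over all $(y,h)$ gives the claim. So from now on fix $(y,h)$ and drop it from the notation, writing $S_n = S_n(y,h)$, $\varphi(n) = \varphi_{(y,h)}(n)$, etc.

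**Key steps.** First, recall from Lemma~\ref{LEMM-SsAdd} that $\Pbarre_p$-almost surely $\frac{1}{S_n}\log N_{(ny,S_n)} \to \alpha_p(y,h)$, and every such open path from $(0,0)$ to $(ny, S_n)$ can be continued to infinity (by construction $(0,0) \to (ny, S_n) \to +\infty$), so in fact $\overline{N}_{(ny,S_n)} = N_{(ny,S_n)}$ and the same limit holds for $\overline{N}_{(ny,S_n)}$. The idea is then: given a level $n$, look at the last regenerating point $(\varphi(n)y, S_{\varphi(n)})$ which lies at or above level $n$, count the open paths reaching it, and observe that those paths, being the beginnings of infinite open paths, pass through level $n$ at some site reached from $(0,0)$; each such path restricted to its first $n$ steps is an open path counted by $\overline{N}_n$. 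The subtlety is that several distinct paths to $(\varphi(n)y, S_{\varphi(n)})$ could share the same prefix up to level $n$, so this does not immediately give $\overline{N}_n \ge \overline{N}_{(\varphi(n)y, S_{\varphi(n)})}$. To control this, note that two such paths with a common prefix up to level $n$ differ only on the levels between $n$ and $S_{\varphi(n)}$, and the number of open paths between a fixed site at level $n$ and $(\varphi(n)y, S_{\varphi(n)})$ is at most $(2d+1)^{S_{\varphi(n)} - n}$. Hence
$$
\overline{N}_n \ge \overline{N}_{(\varphi(n)y, S_{\varphi(n)})} \cdot (2d+1)^{-(S_{\varphi(n)} - n)}.
$$
Taking $\frac1n \log$ of both sides: $\frac1n \log \overline{N}_n \ge \frac{S_{\varphi(n)}}{n}\cdot\frac{1}{S_{\varphi(n)}}\log \overline{N}_{(\varphi(n)y,S_{\varphi(n)})} - \frac{S_{\varphi(n)}-n}{n}\log(2d+1)$.

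**Passing to the limit.** Now use \eqref{EQ-LGNphi}: $\Pbarre_p$-almost surely $\varphi(n) \to +\infty$ and $S_{\varphi(n)}/n \to 1$. Combined with the almost sure convergence $\frac{1}{S_m}\log \overline{N}_{(my,S_m)} \to \alpha_p(y,h)$ along the subsequence $m = \varphi(n)$, the first term converges to $\alpha_p(y,h)$. For the error term, Lemma~\ref{LEMM-sautemouton} gives $\Pbarre_p(S_{\varphi(n)} - n \ge n) \le A e^{-Bn}$, so by Borel--Cantelli, almost surely $S_{\varphi(n)} - n = o(n)$ (in fact it is eventually $< n$, but $o(n)$ is all we need), hence $\frac{S_{\varphi(n)}-n}{n}\log(2d+1) \to 0$. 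Therefore $\liminf_n \frac1n \log \overline{N}_n \ge \alpha_p(y,h)$ almost surely, and taking the supremum over the countable family of pairs $(y,h)$ yields the lemma.

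**Main obstacle.** The delicate point is the path-counting inequality $\overline{N}_n \ge \overline{N}_{(\varphi(n)y, S_{\varphi(n)})} (2d+1)^{-(S_{\varphi(n)}-n)}$: one must argue carefully that grouping the open paths to the regenerating point according to their prefix up to level $n$ gives at most $(2d+1)^{S_{\varphi(n)}-n}$ paths per group, and that each group's prefix is a genuine element counted by $\overline{N}_n$ (it reaches level $n$ from the origin and is the start of an infinite open path, since the full path to $(\varphi(n)y, S_{\varphi(n)})$ continues to infinity). Everything else is a routine combination of the almost sure convergences already established in Lemma~\ref{LEMM-SsAdd}, equation \eqref{EQ-LGNphi}, and the exponential tail bound of Lemma~\ref{LEMM-sautemouton} via Borel--Cantelli.
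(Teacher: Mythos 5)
Your proof is correct, but it takes a slightly different route from the paper's and is a bit heavier than it needs to be. The paper also fixes a pair $(y,h)$ and passes through the regenerating times $S_k = S_k(y,h)$, but its bridge from the subsequence to all $n$ is simply the observation that $(\overline N_n)_{n\ge 1}$ is \emph{non-decreasing} in $n$ (a path counted by $\overline N_n$ extends, by its infinite continuation, to a distinct path counted by $\overline N_{n+1}$). Hence, for $S_k \le n \le S_{k+1}$, one gets directly $\frac1n \log \overline N_n \ge \frac{1}{S_{k+1}} \log \overline N_{S_k} \ge \frac{S_k}{S_{k+1}} \cdot \frac{1}{S_k}\log N_{(ky,S_k)}$, and then \eqref{EQ-LGNsigma} and Lemma \ref{LEMM-SsAdd} finish the job; there is no need to control a deficit $S_{\varphi(n)}-n$ at all. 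You instead look at the first regenerating point above level $n$ and, because that point is strictly above level $n$, you must argue that distinct prefixes up to level $n$ account for at least $\overline N_{(\varphi(n)y, S_{\varphi(n)})} (2d+1)^{-(S_{\varphi(n)}-n)}$ paths. This path-grouping argument is correct, but it introduces an error term that you then kill with Lemma \ref{LEMM-sautemouton} plus Borel--Cantelli. Two small remarks on that last step: Lemma \ref{LEMM-sautemouton} as stated only yields $S_{\varphi(n)}-n < n$ eventually (i.e.\ $O(n)$, not $o(n)$), so invoking it for the $o(n)$ claim is a minor misattribution; the $o(n)$ estimate $S_{\varphi(n)}-n = o(n)$ actually follows from the renewal law of large numbers \eqref{EQ-LGNphi}, which you already cite for $S_{\varphi(n)}/n \to 1$ and which is all you need. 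In short: both proofs are valid, the paper's is more economical because monotonicity of $\overline N_n$ sidesteps the path-counting and the deficit control, while yours is a self-contained alternative that trades that one-line monotonicity observation for a quantitative grouping bound.
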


\begin{proof}
Take $(y,h) \in \Zd\times \N^*$.
Note that $(\overline N_n)_{n \ge 1}$ is non-decreasing, and considering the increasing sequence $S_k=S_k(y,h)$, we see that, $\Pbarre_p$ almost surely, for every integer $n$ such that $S_k \le n \le S_{k+1}$, 
$$\frac{1}{n} \log \overline N_n \ge \frac{1}{S_{k+1}} \log \overline N_{S_k} \ge \frac{S_k}{S_{k+1}} \frac{\log \overline N_{(ky,S_k)}}{S_k}.$$
With \eqref{EQ-LGNsigma} and Lemma \ref{LEMM-SsAdd}, we deduce that $\Pbarre_p$ almost surely,
$$\miniop{}{\liminf}{n\to +\infty} \frac{1}{n} \log \overline N_n \ge \alpha_p(y,h),
$$
which completes the proof.
\end{proof}

\begin{lemm} $\Pbarre_p$-almost surely, 
\label{LEMM-limsup}
$\displaystyle \miniop{}{\limsup}{n\to +\infty} \frac{1}{n} \log \overline N_n \le \alpha_p.$
\end{lemm}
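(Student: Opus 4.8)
The plan is to dominate, for every admissible endpoint $(x,n)$, the number $N_{(x,n)}$ of open paths from $(0,0)$ to $(x,n)$ by the number of open paths reaching a \emph{regenerating} point $Q=(\varphi_{(y,h)}(m)\,y,\,S_{\varphi_{(y,h)}(m)}(y,h))$ associated with a direction $(y,h)$ drawn from a fixed finite set, with $m\approx(1+\epsilon)n$, and then to invoke Lemma~\ref{LEMM-SsAdd} and let $\epsilon\to0$. Fix $\epsilon>0$. By Proposition~\ref{PROP-GD} and Borel--Cantelli, $\Pbarre_p$-almost surely $H_n\subset B_{\mu_p}(0,(1+\epsilon)n)$ for all large $n$; hence any open path counted by $\overline N_n$ ends at some $(x,n)$ with $(0,0)\to(x,n)\to+\infty$ and $x\in B_{\mu_p}(0,(1+\epsilon)n)$. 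Grouping the counted paths by their (unique) point at level $n$ gives $\overline N_n=\sum_x N_{(x,n)}$, the sum being over such endpoints, so with $\Card{H_n}\le Cn^d$ it suffices to prove that $\tfrac1n\log N_{(x,n)}\le(1+3\epsilon)\alpha_p+o(1)$, uniformly over these $x$, $\Pbarre_p$-a.s.

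By Lemma~\ref{LEMM-dirdense} fix a finite $\mathcal D\subset\Zd\times\N^*$ such that $\{y/\Ebarre_p(s(y,h)):(y,h)\in\mathcal D\}$ is $\delta$-dense in the (closed) ball $B_{\mu_p}(0,1)$, where $\delta=\delta(\epsilon)>0$ will be fixed below, and set $m=\lceil(1+\epsilon)n\rceil$. Given an admissible endpoint $x$, one has $x/m\in B_{\mu_p}(0,1)$, so we may pick $(y,h)\in\mathcal D$ with $\mu_p\big(y/\Ebarre_p(s(y,h))-x/m\big)\le\delta$; put $k=\varphi_{(y,h)}(m)$ and $Q=(ky,S_k(y,h))$, so that $(0,0)\to Q\to+\infty$ by construction of the regenerating points. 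By the renewal law of large numbers~\eqref{EQ-LGNphi}, $ky/m\to y/\Ebarre_p(s(y,h))$ and $S_k(y,h)/m\to1$, and by a Markov-inequality estimate on the overshoot as in Lemma~\ref{LEMM-sautemouton}, $S_k(y,h)-m\le\epsilon n$ outside a summable event. Consequently, outside a summable event, $\mu_p(ky-x)\le m\delta+o(n)$ while $S_k(y,h)-n\ge m-n\ge\epsilon n$, so, if $\delta$ is chosen small enough in terms of $\epsilon$ and of the constant $\epsilon'$ fixed below,
$$\mu_p(ky-x)\;\le\;(1-\epsilon')\,\big(S_k(y,h)-n\big)$$
for all admissible $x$ and all large $n$.

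Now the routing step. Since $(x,n)\to+\infty$, the cluster issued from $(x,n)$ is infinite, and applying Proposition~\ref{PROP-GD} to the shifted conditioned process (using $\Pbarre_p\le C_0\P_p$, a union bound over the $O(n^d)$ candidate sites, and $S_k(y,h)-n\ge\epsilon n$ so that each failure probability is at most $A\exp(-B\epsilon n)$ uniformly over the relative level), $\Pbarre_p$-a.s. for all large $n$ and all admissible $x$ the coupled zone of the $(x,n)$-process at level $S_k(y,h)$ contains every lattice point of $B_{\mu_p}\big(x,(1-\epsilon')(S_k(y,h)-n)\big)$ that is reached from level $n$. By the previous display $ky$ lies in this ball, and it is reached from level $n$ because the open path from $(0,0)$ to $Q$ must cross level $n$; hence there is an open path from $(x,n)$ to $Q$. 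Concatenating it with each of the $N_{(x,n)}$ open paths from $(0,0)$ to $(x,n)$ yields $N_{(x,n)}$ pairwise distinct open paths from $(0,0)$ to $Q$, whence $N_{(x,n)}\le N_Q$. By Lemma~\ref{LEMM-SsAdd}, $\tfrac1{S_k(y,h)}\log N_Q\to\alpha_p(y,h)\le\alpha_p$, uniformly over the finite set $\mathcal D$, while $S_k(y,h)/n\le1+3\epsilon+o(1)$; therefore $\tfrac1n\log N_{(x,n)}\le(1+3\epsilon)\alpha_p+o(1)$ uniformly over admissible $x$. Combined with the reduction above this gives $\tfrac1n\log\overline N_n\le(1+3\epsilon)\alpha_p+o(1)$ outside a summable event, hence $\limsup_n\tfrac1n\log\overline N_n\le(1+3\epsilon)\alpha_p$ $\Pbarre_p$-a.s.; intersecting over $\epsilon=1/j$, $j\ge1$, yields the claim.

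The main obstacle is the routing step: turning the purely geometric coupled-zone statement of Proposition~\ref{PROP-GD} into the \emph{counting} comparison $N_{(x,n)}\le N_Q$, and doing so \emph{uniformly} over the random family of admissible endpoints. This requires, first, the quantitative balance between $\epsilon$, $\delta$, $\epsilon'$ and the renewal overshoot that simultaneously keeps $Q$ at level $(1+O(\epsilon))n$ and leaves enough room to join $(x,n)$ to $Q$ inside the coupled zone; and second, replacing the conditioning on the random events $\{(x,n)\to+\infty\}$ by a union bound over deterministic sites together with large-deviation estimates for the shifted process under $\Pbarre_p$ and Borel--Cantelli, so that a single almost-sure event serves for all endpoints at once. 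The remaining ingredients — confinement of $H_n$, the renewal law of large numbers, concatenation of paths, and Lemma~\ref{LEMM-SsAdd} — are routine.
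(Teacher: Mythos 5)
Your proof is correct and follows the same high-level strategy as the paper: confine the endpoints to $B_{\mu_p}(0,(1+\epsilon)n)$ via the shape theorem, cover that ball by finitely many regenerating directions $(y,h)$, route every surviving path to a regenerating point $Q$ sitting at level $\approx(1+\epsilon)n$ by a coupled-zone argument, and then invoke Lemma~\ref{LEMM-SsAdd}.

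The one genuine implementation difference is the orientation of the coupled-zone routing step. The paper looks \emph{backward} in time from the regenerating point $M_n=(Z_n,V_n)$: since $(0,0)\to M_n$, the reversed cluster from $M_n$ survives for at least $V_n\ge n(1+\epsilon)$ steps, and since $(x,n)\to\infty$, the site $x$ (shifted) lies in $\xi^{\Zd}$ at the right backward depth; the reversed coupled zone (applied at the fixed depth $\epsilon n$ and then propagated by monotonicity of $\ell\mapsto K'_\ell$) then forces $(x,n)\to M_n$. You look \emph{forward} in time from $(x,n)$: since $(x,n)\to\infty$, its forward coupled zone contains the relevant ball, and since $(0,0)\to Q$ the site $ky$ (shifted) lies in $\xi^{\Zd}$ at the right forward depth, hence $(x,n)\to Q$. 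Both readings of the coupling are legitimate and both require the same $\Pbarre_p\le C_0\P_p$ plus union-bound-plus-Borel--Cantelli bookkeeping; the paper's backward version is marginally more economical because the relevant depth $\epsilon n$ is deterministic once you condition on $G_n$, whereas your forward version calls the shape theorem at the random depth $S_k(y,h)-n$, so you must either take the union bound over levels (as your phrase ``uniformly over the relative level'' suggests) or, more simply, apply the shape bound at the fixed depth $\epsilon n$ and use monotonicity of $K'_\ell$, which then forces the slightly smaller target $\mu_p(ky-x)\le(1-\epsilon')\epsilon n$ rather than $(1-\epsilon')(S_k(y,h)-n)$; either fix is routine. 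A second cosmetic difference: the paper groups by direction and gets $\overline N_n\le\sum_{(y,h)\in F}\overline N_{M_n(y,h)}$ with $|F|$ terms, whereas you group by endpoint and pay a polynomial factor $Cn^d$; this is immaterial after taking logarithms.
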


\begin{proof} 
Fix $\epsilon>0$ and $\eta \in (0,1)$. We first approximate $B_{\mu_p}(0,1)$ with a finite number of points: with Lemma \ref{LEMM-dirdense}, we can find a finite set $F \subset \Zd \times \N^*$ such that 
\begin{align*}
B_{\mu_p}(0,1+\epsilon) & \subset  \bigcup_{(y,h) \in F} B_{\mu_p} \left( \frac{(1+\epsilon)y}{\Ebarre_p(s(y,h))},(1-\eta)\epsilon/2 \right).
\end{align*}
Then, for $n$ large, we will control the number $\overline{N}_n$ using these directions. We define $M_n(y,h)$ as the first point in the sequence $(ky, S_{(y,h)}(k))_{k\ge 1}$ of regerating points associated to $(y,h)$ to be above level $n(1+\epsilon)$. Using the notation introduced in~\eqref{levarphi}, we set
\begin{eqnarray*}
\forall (y,h) \in F \quad k_n=k_n(y,h) & = &\varphi_{(y,h)}(n(1+\epsilon)), \\
Z_n=Z_n(y,h) & = & k_n.y \in \Zd, \\
% \quad \text{ and } \quad 
V_n=V_n(y,h) &=& S_{k_n}(y,h) \in \N, \\
M_n=M_n(y,h)& = & (Z_n,V_n).
\end{eqnarray*}
For a given $(y,h) \in F$, the law of large numbers \eqref{EQ-LGNphi} says that
\begin{align}
\label{oucest}
k_n(y,h) \sim  \frac{n(1+\epsilon)}{\Ebarre_p(s(y,h))} \text{ and } V_n(y,h) \sim n(1+\epsilon).
\end{align}
So $\Pbarre_p$ almost surely, for all $n$ large enough
$$\forall (y,h) \in F \quad  B_{\mu_p} \left( \frac{(1+\epsilon)ny}{\Ebarre_p(s(y,h))},(1-\eta)\epsilon n/2 \right) \subset B_{\mu_p} \left(  Z_n(y,h) ,  (1-\eta) \epsilon n \right).$$
It follows then from the shape theorem \eqref{E-shapetheo}, that $\Pbarre_p$ almost surely, for all $n$ large enough
\begin{equation}
\label{EQ-unun}\xi_n\subset
B_{\mu_p}(0,(1+\epsilon)n)\subset \miniop{}{\cup}{(y,h) \in F} B_{\mu_p} \left(  Z_n(y,h) ,  (1-\eta) \epsilon n \right).
\end{equation} 
The strategy is to prove that for $n$ large enough, for each $x\in B_{\mu_p}(0,n(1+\epsilon))$, the $n$ first steps of an open path that goes from $(0,0)$ to $(x,n)$ and then to infinity are also the $n$ first steps  of an open path which contributes to $N_{M_n(y,h)}$ for any $(y,h) \in F$ such that $x\in B_{\mu_p}(Z_n(y,h),(1-\eta)\epsilon n)$. To do so, we will use the coupled zone.

Note 
$$G_n=\miniop{}{\cap}{ M\in \{-2n,\dots,2n\}^{d}\times\{0,\dots,2n\}} \left\{\begin{array}{c}\tau<n(1+\epsilon)\\ \text{ or }K'_{n\epsilon}\supset B_{\mu_p}(0,(1-\eta)\epsilon n)\cap\Zd\end{array}\right\} \circ \theta^{\downarrow}_{M}.$$
Since $\theta^{\downarrow}_{M}$ preserves $\P_p$, we easily deduce from~\eqref{EQ-tauexp}, Proposition \ref{PROP-GD} and a Borel--Cantelli argument that $\Pbarre_p$ almost surely, $G_n$ holds for every $n$ large enough.

Now take $n$ large enough such that~\eqref{EQ-unun} holds, $G_n$ holds, together with $V_n(y,h)\le 2n$ for each $(y,h)\in F$, which is possible thanks to~\eqref{oucest}.

Fix $x\in \xi_n$ such that $(x,n)\to \infty$. As  \eqref{EQ-unun} holds,  choose $(y,h)\in F$ such that
$x\in B_{\mu_p}(Z_n(y,h),(1-\eta)\epsilon n)$.
Since $(0,0)\to M_n$ and $V_n\ge n(1+\epsilon)$, we know that $\tau \circ \theta^{\downarrow}_{M_n}\ge n(1+\epsilon)$.
Since $M_n\in\{-2n,\dots,2n\}^{d}\times\{0,\dots,2n\}$, $\mu_p(x-Z_n)\le  (1-\eta)\epsilon n$ and
$G_n$ holds, we have $x-Z_n\in K'_{n\epsilon} \circ \theta^{\downarrow}_{M_n}$. Note that $V_n(y,h) \ge n(1+\epsilon)$, so  $V_n(y,h)-n \ge \epsilon n$. Note also that $(x,n)\to \infty$ implies that
$x-Z_n\in \xi^{\Zd}_{V_n(y,h)-n} \circ \theta^{\downarrow}_{M_n}$.
By definition of the coupled zone, we have
$x-Z_n\in \xi^{0}_{V_n(y,h)-n} \circ \theta^{\downarrow}_{M_n}$.
Going back to the initial orientation, it means that $(x,n)\to M_n$.
So, if $\gamma$ is a path from $(0,0)$ to $(x,n)$, it is clear that
$\gamma$ is the restriction of a path that goes from $(0,0)$ to $M_n$, and then to infinity. Then,
\begin{align*}
\overline{N}_n &  \le \sum_{(y,h) \in F}\overline{N}_{M_n(y,h)}.
\end{align*} 
\begin{figure}[h!]
\begin{center}
\includegraphics[scale=0.7]{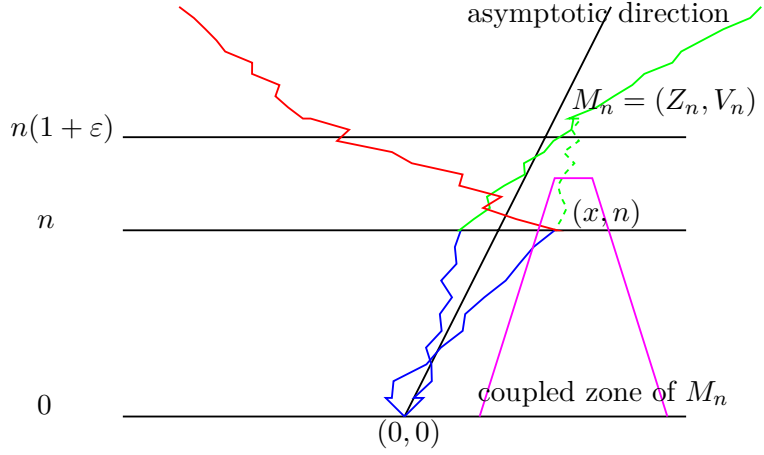}
%\put(-230,10){$-k$}%
%\put(-223,35){$(0,0)$}%
\put(-165,5){$(0,0)$}%  %-143
\put(-92,88){$(x,n)$}%
\put(-92,130){$M_n=(Z_n,V_n)$}%
\put(-127,20){coupled zone of $M_n$}%
%\put(-155,3){$0$}%
\put(-131,163){asymptotic direction}%
\put(-302,120){$n(1+\epsilon)$}%
\put(-292,85){$n$}%
\put(-292,15){$0$}%
\caption{Red paths are the part above $n$ of paths from $(0,0)$ to infinity. Green paths are the part above $n$ of paths from $(0,0)$ to infinity  that meet some $M_n$. To bound $\overline{N}_n$, we prove that each start of a red path is the start of a green path.}
\label{unefigure} 
\end{center}
\end{figure}
Next, we use the directional limits given by Lemma \ref{LEMM-SsAdd}: $\Pbarre_p$ almost surely,
$$ \forall (y,h) \in F \quad \lim_{n \to +\infty} \frac{1}{V_n(y,h)} \log \overline{N}_{M_n(y,h)} = \alpha_p(y,h).$$
As  $V_n(y,h) \sim n(1+\epsilon)$, we obtain from the shape theorem~\eqref{E-shapetheo} that $\Pbarre_p$ almost surely, for all $n$ large enough
\begin{equation*}
%\label{EQ-deuxdeux}
\forall (y,h) \in F \quad \frac{1}{n (1+\epsilon)} \log \overline{N}_{M_n(y,h)} \le \alpha_p(y,h) + \epsilon \le \alpha_p+\epsilon.
\end{equation*} 
So, for $n$ large enough, we have $\Pbarre_p$ almost surely,
\begin{align*}
\overline{N}_n &\le \sum _{(y,h) \in F}\overline{N}_{M_n(y,h)}
 \le |F|\exp( (\alpha_p+\epsilon)n(1+\epsilon)), \\
\text{so } \miniop{}{\limsup}{n \to +\infty} \frac{1}{n} \log(\overline N_n) & \le  (1+\epsilon)(\alpha_p+\epsilon).
\end{align*} 
We complete the proof by letting $\epsilon$ go to $0$.
\end{proof}

Finally, we prove that working with open paths or with open paths that are the beginning of an infinite open path is essentially the same:
\begin{lemm}
\label{LEMM-pareil} $\Pbarre_p$-almost surely, 
$$\miniop{}{\limsup}{n\to +\infty} \frac{\log N_n}{n} = \miniop{}{\limsup}{n\to +\infty} \frac{\log \overline{N}_n}{n} 
\quad \text{ and } \quad \miniop{}{\liminf}{n\to +\infty} \frac{\log N_n}{n} = \miniop{}{\liminf}{n\to +\infty} \frac{\log \overline{N}_n}{n} .$$
\end{lemm}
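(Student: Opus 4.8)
The inequality $N_n \ge \overline{N}_n$ is trivial, so $\limsup$ and $\liminf$ of $\frac{1}{n}\log N_n$ dominate those of $\frac{1}{n}\log \overline{N}_n$. The content is the reverse inequality, and the natural approach is to show that almost surely, for $n$ large, every open path of length $n$ can be extended a bounded (in fact sublinear, even $O(\log n)$) number of extra steps to become the beginning of an infinite open path, at a multiplicative cost that is negligible on the exponential scale. Concretely, I would fix a small $\epsilon > 0$ and produce, for each endpoint $(x,n)$ of an open path counted by $N_n$, a nearby point $(x', n')$ with $n' = n + o(n)$ such that $(x,n) \to (x',n') \to +\infty$ and such that the number of candidate points $(x',n')$ is subexponential; then $N_n \le (\text{subexp})\cdot \overline{N}_{n'}$, and dividing by $n$ and taking $n \to \infty$ gives the claim once we know $n'/n \to 1$.

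**Key steps.** First, recall from \eqref{EQ-tauexp} that a finite cluster has length $\le C\log n$ with probability $1 - o(n^{-2})$ uniformly over a polynomial number of starting points; more precisely, I would introduce an event $G_n'$, analogous to the event $G_n$ in the proof of Lemma~\ref{LEMM-limsup}, asserting that for every site $M \in \{-2n,\dots,2n\}^d \times \{0,\dots,2n\}$ one has $\tau \circ \theta^{\downarrow}_M < \epsilon n$ or $\tau \circ \theta^{\downarrow}_M = +\infty$, and that $H_n \subset B_{\mu_p}(0,(1+\epsilon)n)$. By \eqref{EQ-tauexp}, the shape theorem \eqref{E-shapetheo}, translation invariance of $\P_p$ under $\theta^{\downarrow}_M$, and Borel--Cantelli, $G_n'$ holds $\Pbarre_p$-almost surely for all $n$ large. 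Second, on $G_n'$, take any open path $\gamma$ from $(0,0)$ to $(x,n)$ counted by $N_n$; since the cluster of $(x,n)$ is either infinite or of length $< \epsilon n$, in the first case $\gamma$ already contributes to $\overline{N}_n$, and in the second case we let $n'$ be the top level of that finite cluster, so $n \le n' \le n(1+\epsilon)$ and there is some endpoint $(x',n')$ with $(0,0)\to (x,n)\to (x',n')$ and, by maximality of the cluster length, $(x',n')$ has no open path going further — which is not yet what we want. So instead of using the maximal extension I would argue differently: the map $\gamma \mapsto$ (some fixed maximal extension $\bar\gamma$ of $\gamma$ to level $n' := \lceil n(1+\epsilon)\rceil$) is not injective, but the number of open paths of length $n'$ that are the beginning of an infinite path and whose restriction to level $n$ is a given $\gamma$ is at least $1$ whenever $\gamma$ itself extends to infinity, and otherwise we must absorb $\gamma$ into a path that does extend. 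The clean way: every $\gamma$ counted by $N_n$ with finite cluster has, on $G_n'$, its whole cluster contained in levels $\le n + \epsilon n$ and inside $B_{\mu_p}(0,(1+\epsilon)n)$; pick a point $(w, n') \in \xi_{n'} \cap B_{\mu_p}(0,(1+\epsilon)n')$ that lies in the coupled zone relative to the top of $\gamma$'s cluster (exactly as in Lemma~\ref{LEMM-limsup}, using $K'$), so that $(x,n)\to (w,n')$; then $\gamma$ is the level-$\le n$ restriction of a path contributing to $\overline{N}_{w,n'}$. Summing over the at most $(4n+1)^d$ possible values of $w$ gives $N_n \le (4n+1)^d\,\overline{N}_{n'}$ with $n' = \lceil n(1+\epsilon)\rceil$.

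**Conclusion and main obstacle.** Taking logarithms, dividing by $n$, and using $\frac{1}{n}\log((4n+1)^d) \to 0$ together with $n'/n \to 1+\epsilon$, we get $\limsup_n \frac{1}{n}\log N_n \le (1+\epsilon)\limsup_n \frac{1}{n}\log\overline{N}_n$ and likewise $\liminf_n \frac{1}{n}\log N_n \le (1+\epsilon)\liminf_n \frac{1}{n}\log\overline{N}_n$, and letting $\epsilon \downarrow 0$ finishes both equalities (the $\liminf$ case also using Lemma~\ref{LEMM-liminf} and Lemma~\ref{LEMM-limsup} to know the $\overline{N}_n$ limits are finite and positive, so the multiplicative error is harmless). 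The main obstacle is the combinatorial bookkeeping in the middle step: one must make sure that the extension procedure $\gamma \mapsto (w,n')$ genuinely shows $\gamma$ is the start of a path counted by $\overline{N}_{w,n'}$ — this requires, just as in Lemma~\ref{LEMM-limsup}, that $(x,n)$ with finite forward cluster nonetheless reaches $(w,n')$ via the coupled-zone trick, which only works because $(w, n')$ is hit from \emph{some} starting site at level $0$ (namely via $\xi^{\Zd}$) and lies in $K'$, hence is hit from $0$; the subtlety is that here the cluster of $(x,n)$ is finite, so I must instead locate $(w,n')$ as an endpoint reached by a \emph{different}, infinite path and use the coupled zone to route $(x,n)$ into it. Getting this routing right — and confirming the number of such target points is only polynomial — is the one place where care is needed; everything else is a routine repetition of the estimates already assembled for Lemma~\ref{LEMM-limsup}.
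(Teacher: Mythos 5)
Your proposal contains a genuine gap in the middle step. The forward-extension strategy cannot work: if $\gamma$ ends at $(x,n)$ and the forward cluster of $(x,n)$ is finite (say contained below level $n(1+\epsilon)$), then $(x,n)$ reaches \emph{no} point at level $n'=\lceil n(1+\epsilon)\rceil$, so there is no open path of length $n'$ whose restriction to level $n$ is $\gamma$ — regardless of any coupled-zone considerations. The coupled zone is used in Lemma~\ref{LEMM-limsup} only to \emph{upgrade} the connectivity of a point already known to survive: one has $(x,n)\to\infty$, so looking backward from $M_n$ the site $x-Z_n$ belongs to $\xi^{\Zd}_{V_n-n}\circ\theta^{\downarrow}_{M_n}$, and membership in $K'\circ\theta^{\downarrow}_{M_n}$ then improves $\xi^{\Zd}$ to $\xi^0$, giving $(x,n)\to M_n$. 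Without the hypothesis $(x,n)\to\infty$ the first membership fails and the coupled zone is of no use; it cannot create an open path out of a dead cluster. You sensed this difficulty in your final paragraph but did not resolve it, and in fact it cannot be resolved this way.

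The paper's actual proof avoids the problem by \emph{truncating backward} rather than extending forward. Fix $\epsilon\in(0,1)$ and let $E_n=\bigcap_{\|z\|_1\le n}\{\tau<\epsilon n\text{ or }\tau=+\infty\}\circ\theta_{(z,\lfloor n(1-\epsilon)\rfloor)}$, which holds eventually a.s.\ by \eqref{EQ-tauexp} and Borel--Cantelli. Any open path $\gamma$ of length $n$ passes through some $(z,\lfloor n(1-\epsilon)\rfloor)$, and the remaining segment of $\gamma$ has length $\ge\epsilon n$, so $\tau\circ\theta_{(z,\lfloor n(1-\epsilon)\rfloor)}\ge\epsilon n$; on $E_n$ this forces $\tau\circ\theta_{(z,\lfloor n(1-\epsilon)\rfloor)}=+\infty$, so the truncation of $\gamma$ to level $\lfloor n(1-\epsilon)\rfloor$ contributes to $\overline{N}_{\lfloor n(1-\epsilon)\rfloor}$. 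Hence $N_n\le(2d+1)^{\epsilon n+1}\overline{N}_{\lfloor n(1-\epsilon)\rfloor}$, and letting $\epsilon\downarrow0$ finishes. This is strictly simpler than your plan: no coupled zone, no shape theorem, and the combinatorial factor $(2d+1)^{\epsilon n+1}$ (the number of ways to complete the last $\lceil\epsilon n\rceil$ steps) is absorbed in the $\epsilon$-limit rather than requiring a polynomial bound on target points. You should replace the forward-extension idea with this backward truncation.
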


\begin{proof}
Fix $ 0<\epsilon<1$ and define, for $n \ge 1$,  the following event
$$E_n=\miniop{}{\cap}{\|z\|_1\le n} \{\tau<\epsilon n\text{ or }\tau=+\infty\}\circ \theta_{(z,\lfloor n(1-\epsilon)\rfloor)}.$$
Assume that $E_n$ occurs. Consider a path $\gamma= (\gamma_i,i)_{0 \le i \le n}$ from $(0,0)$ to $\mathbb Z^d \times \{n\}$ and set $z=\gamma_{\lfloor n(1-\epsilon)\rfloor}$: as $\tau\circ \theta_{(z,\lfloor n(1-\epsilon)\rfloor)} \ge \epsilon n$, the event $E_n$ implies that $\tau\circ \theta_{(z,\lfloor n(1-\epsilon)\rfloor)} =+\infty$. So $(\gamma_i,i)_{0 \le i \le \lfloor n(1-\epsilon)\rfloor}$ contributes to $\overline{N}_{\lfloor n(1-\epsilon)\rfloor}$ and thus, on $E_n$,
\begin{align*}
& N_n\le (2d+1)^{\epsilon n+1} \overline{N}_{\lfloor n(1-\epsilon)\rfloor}, \\
\text{so } \; \frac{1}{n} \log N_n & \le \left( \epsilon+\frac1{n} \right) \log  (2d+1)+ \frac{1}{n} \log \overline{N}_{\lfloor n(1-\epsilon)\rfloor} \\
& \le \left( \epsilon+\frac1{n} \right) \log  (2d+1)+ \frac{1}{\lfloor n(1-\epsilon)\rfloor} \log \overline{N}_{\lfloor n(1-\epsilon)\rfloor}.
\end{align*}
The exponential estimate \eqref{EQ-tauexp} ensures  that
$$\forall n \ge 1 \quad \P_p(E_n^c) \le C_d A n^d \exp(-B\epsilon n)\le A'\exp(-B' n).$$
With the Borel--Cantelli lemma, this leads to:
$$\miniop{}{\limsup}{n\to +\infty} \frac{1}{n} \log N_n \le \epsilon \log  (2d+1)+\miniop{}{\limsup}{n\to +\infty} \frac{1}{n} \log \overline{N}_n.$$
By taking $\epsilon$ to $0$, we obtain
$$\miniop{}{\limsup}{n\to +\infty} \frac{\log {N}_n}{n} \le\miniop{}{\limsup}{n\to +\infty} \frac{\log \overline{N}_n}{n}.$$
 The proof for the inequality with $\liminf$ instead of $\limsup$ is identical.
Since $\overline{N}_n \le N_n$, the reversed inequalities are obvious. 
\end{proof}

%%%%%%%%%%%%%%%%%%%%%%%%%%%%%%%%%%%%%%%%%%%%%%%%%%%%

% % %extension6%
\section{Proof of Theorem \ref{THEO-nombreAn}}

\subsection{Construction and continuity of $\tilde{\alpha}_p$}
Recall that $D_p$ was defined in~\eqref{E:defiDp}.
Our strategy is to prove that the identity
$$\tilde \alpha_p \left( \frac{y}{\Ebarre_p(s(y,h))}\right)= \alpha_p (y,h).$$
defines a map on $D_p$ that is uniformly continuous on every compact subset of $D_p\cap \mathring{B}_{\mu}(0,1)$. We first refine the argument of Lemma \ref{LEMM-limsup} implying the coupled zone:

\begin{lemm}
\label{LEMM-continuitemiam}
Let $\beta \in (0,1)$. There exists $\alpha>0$ such that the following holds.
For every $\epsilon>0$, for every $\hat x_1, \hat x_2 \in B_{\mu_p}(0, 1-\beta)$,
 if
$$\mu_p \left( \hat x_1 - \hat x_2 \right) \le \alpha \epsilon,$$
then  for any sequences of points $(M_n^1=(Z_n^1,V_n^1))_n$ and $(M_n^2=(Z_n^2,V_n^2))_n$ in $\Zd \times \N^*$, for any $C>0$ such that, $\Pbarre_p$ almost surely,  
\begin{eqnarray*}
\frac{Z_n^1}{V_n^1} \to \hat x_1 & \text{ and } & \frac{V_n^1}{n} \to C(1+\epsilon), \\
\frac{Z_n^2}{V_n^2} \to \hat x_2 & \text{ and } & \frac{V_n^2}{n} \to C,
\end{eqnarray*}
we have the following property: $\Pbarre_p$ almost surely,  for every $n$ large enough, if $(0,0) \to (Z_n^1,V_n^1)$ and $(0,0) \to (Z_n^2,V_n^2) \to \infty$, then $\overline N_{(Z_n^2,V_n^2)} \le N_{(Z_n^1,V_n^1)}$.
\end{lemm}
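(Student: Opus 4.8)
The plan is to follow closely the coupled--zone argument in the proof of Lemma~\ref{LEMM-limsup}. First I would dispose of the trivial case: if $(0,0)\to(Z_n^2,V_n^2)\to\infty$ fails then $\overline N_{(Z_n^2,V_n^2)}=0$ and the inequality is automatic, so it is enough to treat the indices $n$ with $(0,0)\to M_n^1$ and $(0,0)\to M_n^2\to\infty$, and for those I claim it suffices to prove that $M_n^2\to M_n^1$ for all $n$ large. Indeed, since $V_n^1/n\to C(1+\epsilon)$ and $V_n^2/n\to C$ we have $V_n^1>V_n^2$ for $n$ large, so $M_n^1$ sits strictly above $M_n^2$; fixing one open path $\delta_n$ from $M_n^2$ to $M_n^1$, the concatenation $\gamma\mapsto\gamma\cdot\delta_n$ injects the open paths from $(0,0)$ to $M_n^2$ that begin an infinite open path into the open paths from $(0,0)$ to $M_n^1$ (the length-$V_n^2$ prefix recovers $\gamma$), whence $\overline N_{(Z_n^2,V_n^2)}\le N_{(Z_n^1,V_n^1)}$.

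To obtain the link $M_n^2\to M_n^1$ I would reuse the reversed--time translations and the coupled zone around $M_n^1$, as in Lemma~\ref{LEMM-limsup}. Because $Z_n^i/n$ and $V_n^i/n$ converge, both points lie, for $n$ large, in a box $\{-Rn,\dots,Rn\}^d\times\{0,\dots,Rn\}$ with $R=R(C,\epsilon,\beta)$. Fixing small $\eta,\rho>0$ and setting $\ell_n=\lfloor(1-\rho)C\epsilon n\rfloor$, I would introduce the good event
\[
G_n=\bigcap_{M\in\{-Rn,\dots,Rn\}^d\times\{0,\dots,Rn\}}\bigl\{\tau<\ell_n\ \text{ or }\ K'_{\ell_n}\supset B_{\mu_p}(0,(1-\eta)\ell_n)\cap\Zd\bigr\}\circ\theta^{\downarrow}_{M},
\]
which, exactly as for $G_n$ in Lemma~\ref{LEMM-limsup} (using that $\theta^{\downarrow}_{M}$ preserves $\P_p$, the estimate~\eqref{EQ-tauexp}, Proposition~\ref{PROP-GD} and Borel--Cantelli over the polynomially many $M$, each bad term being at most $Ae^{-B\ell_n}$), holds $\Pbarre_p$-almost surely for all $n$ large. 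Then, if $(0,0)\to M_n^1$, one has $\tau\circ\theta^{\downarrow}_{M_n^1}\ge V_n^1\ge\ell_n$, so $G_n$ forces $Z_n^2-Z_n^1\in K'_{\ell_n}\circ\theta^{\downarrow}_{M_n^1}$ provided $\mu_p(Z_n^2-Z_n^1)\le(1-\eta)\ell_n$; and if $M_n^2\to\infty$, truncating an infinite open path from $M_n^2$ at level $V_n^1$ gives $Z_n^2-Z_n^1\in\xi^{\Zd}_{V_n^1-V_n^2}\circ\theta^{\downarrow}_{M_n^1}$. Since $(V_n^1-V_n^2)/n\to C\epsilon>(1-\rho)C\epsilon$, we have $V_n^1-V_n^2\ge\ell_n$ for $n$ large, so the definition of $K'_{\ell_n}$ turns the last membership into $Z_n^2-Z_n^1\in\xi^{0}_{V_n^1-V_n^2}\circ\theta^{\downarrow}_{M_n^1}$, i.e.\ $M_n^2\to M_n^1$.

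The only place where a genuinely new computation is needed — and where $\alpha$ gets chosen — is the geometric condition $\mu_p(Z_n^2-Z_n^1)\le(1-\eta)\ell_n$. Here $Z_n^1/n\to C(1+\epsilon)\hat x_1$ and $Z_n^2/n\to C\hat x_2$, so
\[
\frac{\mu_p(Z_n^2-Z_n^1)}{n}\longrightarrow\mu_p\bigl(C(\hat x_2-\hat x_1)-C\epsilon\,\hat x_1\bigr)\le C\mu_p(\hat x_1-\hat x_2)+C\epsilon\,\mu_p(\hat x_1)\le C\epsilon(\alpha+1-\beta),
\]
using $\mu_p(\hat x_1-\hat x_2)\le\alpha\epsilon$ and $\mu_p(\hat x_1)\le1-\beta$, whereas $(1-\eta)\ell_n/n\to(1-\eta)(1-\rho)C\epsilon$. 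The required inequality thus holds for $n$ large as soon as $\alpha+1-\beta<(1-\eta)(1-\rho)$; hence it is enough to set $\alpha=\beta/2$ (depending on $\beta$ only, as demanded) and then, for the given $\beta$, to pick $\eta=\rho$ small enough. I expect this last estimate to be the only real obstacle: the level gap $V_n^1-V_n^2\sim C\epsilon n$ creates a drift term $C\epsilon\,\mu_p(\hat x_1)$ that must be absorbed by the coupled--zone radius $\sim(1-\eta)(1-\rho)C\epsilon n$, which is precisely why $\hat x_1,\hat x_2$ must be kept inside $B_{\mu_p}(0,1-\beta)$ and why the tolerance $\alpha$ on the direction gap has to be a definite fraction of $\beta$. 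Everything else is a faithful transcription of the corresponding steps of Lemma~\ref{LEMM-limsup}.
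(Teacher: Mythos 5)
Your proof is correct and follows essentially the same route as the paper's: reduce to showing $M_n^2 \to M_n^1$ for $n$ large, establish this by combining a Borel--Cantelli argument for a good coupled-zone event (via~\eqref{EQ-tauexp} and Proposition~\ref{PROP-GD}) under reversed-time translations at $M_n^1$ with the geometric condition $\mu_p(Z_n^2 - Z_n^1)\le(1-\eta)\ell_n$, and derive the latter from the triangle inequality together with the bounds $\mu_p(\hat x_1 - \hat x_2)\le\alpha\epsilon$ and $\mu_p(\hat x_1)\le 1-\beta$. Your parameterization ($\ell_n,\eta,\rho$ with $\alpha=\beta/2$) differs only cosmetically from the paper's ($K,\epsilon',\eta$ with $\alpha+1-\beta<\tfrac{K-2}{K}(1-\eta)$), and the final injection step is the same.
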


\begin{proof}
Fix  small $\alpha, \eta>0$ and a large integer $K \ge 3$ such that
$$ \alpha + (1-\beta) <\frac{K-2}{K}(1- \eta). $$
Fix $\epsilon>0$. Set $\epsilon'=\epsilon/K$.

\begin{eqnarray*}
\mu_p \left( \frac{Z_n^2}{Cn}- \frac{Z_n^1}{Cn}\right) 
& \le & \mu_p \left( \frac{Z_n^2}{V_n^2}\right)\left| \frac{V_n^2}{Cn}- 1\right|
+ \mu_p \left( \frac{Z_n^2}{V_n^2}- \hat x_2 \right) 
 + \mu_p \left(\hat x_2 - \hat x_1 \right)\\
& & + \mu_p \left(\hat x_1-\frac{Z_n^1}{V_n^1}  \right)  +  \mu_p \left( \frac{Z_n^1}{V_n^1} \right)\left| \frac{V_n^1}{Cn}- 1\right|.
\end{eqnarray*}
So $\Pbarre$ almost surely, 
$$ \miniop{}{\limsup}{n \to +\infty} \mu_p \left( \frac{Z_n^2}{Cn}- \frac{Z_n^1}{Cn}\right) 
 \le  \left( \alpha + 1 - \beta \right)  \epsilon <\frac{K-2}{K}(1- \eta)\epsilon, 
$$
so $\Pbarre$ almost surely, for every $n$ large enough, 
\begin{equation}
\label{EQ-seq1}
 \mu_p \left( \frac{Z_n^2}{Cn}- \frac{Z_n^1}{Cn}\right) 
 \le \frac{K-2}{K}(1- \eta)\epsilon=(K-2)(1- \eta)\epsilon'.
\end{equation}
By the convergences for the $V_n^i/n$, we know that $\Pbarre_p$ almost surely, for every $n$ large enough, 
\begin{equation}
\label{EQ-seq2}
|V_n^1-Cn(1+\epsilon)| \le Cn\epsilon' \text{ and } |V_n^2-Cn| \le Cn\epsilon'.
\end{equation}
Define
\begin{eqnarray*}
G_n & = &   
 \{ \forall x \in [-Cn(1+2\epsilon), Cn(1+2\epsilon)]^d \quad \forall k \in [Cn(1+\epsilon-\epsilon'),Cn(1+\epsilon+\epsilon')]  \\
&& \quad (\tau\circ\theta^{\downarrow}_{(x,k)}\ge \epsilon' Cn) \Rightarrow \forall m \ge \epsilon' Cn \;
B_{\mu_p}(0, x,(1-\eta)m )   \subset \tilde K'_{m} \circ\theta^{\downarrow}_{(x,k)}\}.
\end{eqnarray*}
With the large deviations for the coupled zone given in Proposition \ref{PROP-GD}, there exist $A,B>0$ such that
$$\forall n\text{ large enough} \quad  \Pbarre_p(G_n^c) \le A \exp(-Bn).$$
Thus, the Borel--Cantelli lemma ensures that $\Pbarre_p(\liminf \;G_n)=1.$

Assume then that $\tau^0=+\infty$. $\Pbarre_p$ almost surely, for every $n$ large enough, we know that \eqref{EQ-seq1}, \eqref{EQ-seq2} and $G_n$ occur.  
 Assume that, for one of these large enough $n$,  $(0,0) \to (Z_n^1,V_n^1)$ and $(0,0) \to (Z_n^2,V_n^2) \to \infty$. Note that 
$$V_n^1-V_n^2 \ge Cn(1+\epsilon-\epsilon')-Cn(1+\epsilon') \ge Cn(K-2) \epsilon'.$$
So, on the event $G_n$, as $(0,0) \to (Z_n^1,V_n^1)$, we see that $\tau\circ \theta^{\downarrow}_{M_n^1} \ge \epsilon' Cn$, so
$$K_{V_n^1-V_n^2}\circ \theta^{\downarrow}_{M_n^1}  \supset B_{\mu_p}(Z_n^1, (1-\eta)C(K-2)n \epsilon').$$
So, with \eqref{EQ-seq1}, we see that ${Z_n^2} \in K_{V_n^1-V_n^2}\circ \theta^{\downarrow}_{M_n^1} $. As $(Z_n^2,V_n^2) \to \infty$, then $(Z_n^2,V_n^2) \to (Z_n^1,V_n^1)$, which gives an injection from the set of open paths from $(0,0)$ to $(Z_n^2,V_n^2)$ into the set of open paths from $(0,0)$ to $(Z_n^1,V_n^1)$.
\end{proof}

For $(y,h) \in \Zd \times \N^*$, we define $M_n(y,h)$ as the first point in the sequence $(ky, S_{(y,h)}(k))$ of regerating points associated to $(y,h)$ to be above level $n$ (see Definition \eqref{levarphi}): \
\begin{eqnarray*}
k_n=k_n(y,h) & = &\varphi_{(y,h)}(n), \\
Z_n=Z_n(y,h) & = & k_n.y \in \Zd \quad \text{ and } \quad V_n=V_n(y,h)=S_{k_n}(y,h) \in \N, \\
M_n=M_n(y,h)& = & (Z_n,V_n).
\end{eqnarray*}
The law of large numbers \eqref{EQ-LGNphi} says that $\Pbarre_p$ almost surely,
$$\frac{Z_n(y,h)}{n}=\frac{k_n(y,h).y}{n}
 \sim  \frac{y}{\Ebarre_p(s(y,h))} \quad \text{ and } \quad \frac{V_n}{n}\sim 1.$$
 The next lemma is a first step toward continuity.
\begin{lemm}
\label{LEMM-continuity} 
Let $\beta \in (0,1)$. There exists $\alpha>0$ such that, for every $\epsilon>0$, for every $(y_1,h_1), (y_2,h_2) \in \Zd \times \N^*$ such that
$$
\mu_p \left( \frac{y_1}{\Ebarre_p(s(y_1,h_1))}\right) \le 1- \beta \text{ and } \mu_p \left(  \frac{y_2}{\Ebarre_p(s(y_2,h_2))} \right) \le 1- \beta, 
$$
 if
$\displaystyle \mu_p \left( \frac{y_1}{\Ebarre_p(s(y_1,h_1))} - \frac{y_2}{\Ebarre_p(s(y_2,h_2))} \right) \le \alpha \epsilon$, \\then 
 $| \alpha_p(y_1,h_1) - \alpha_p(y_2,h_2) | \le \epsilon$.
\end{lemm}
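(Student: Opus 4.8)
The plan is to deduce this continuity statement from the coupled-zone comparison of Lemma~\ref{LEMM-continuitemiam} together with the directional limit of Lemma~\ref{LEMM-SsAdd}. Given $\beta\in(0,1)$, I would let $\alpha_0>0$ be the constant furnished by Lemma~\ref{LEMM-continuitemiam} for this value of $\beta$, and take as the constant announced in the statement $\alpha=\alpha_0/\log(2d+1)$. Then, given $\epsilon>0$, I would set $\delta=\epsilon/\log(2d+1)$ and fix $(y_1,h_1),(y_2,h_2)\in\Zd\times\N^*$ satisfying the hypotheses; writing $\hat x_i=y_i/\Ebarre_p(s(y_i,h_i))$, the hypotheses say precisely that $\hat x_1,\hat x_2\in B_{\mu_p}(0,1-\beta)$ and $\mu_p(\hat x_1-\hat x_2)\le\alpha\epsilon=\alpha_0\delta$, which is exactly the assumption under which Lemma~\ref{LEMM-continuitemiam} applies with small parameter $\delta$.

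Next I would feed into Lemma~\ref{LEMM-continuitemiam} the two sequences of regenerating points $M_n^2=M_n(y_2,h_2)$ and $M_n^1=M_{\lceil n(1+\delta)\rceil}(y_1,h_1)$, writing $M_n^i=(Z_n^i,V_n^i)$. The renewal estimates~\eqref{EQ-LGNphi} give, $\Pbarre_p$-almost surely, $Z_n^2/V_n^2\to\hat x_2$ with $V_n^2/n\to 1$, and $Z_n^1/V_n^1\to\hat x_1$ with $V_n^1/n\to 1+\delta$; and since these are regenerating points, $(0,0)\to M_n^i\to+\infty$ holds $\Pbarre_p$-almost surely. Lemma~\ref{LEMM-continuitemiam}, applied with $C=1$, then yields that $\Pbarre_p$-almost surely, for every $n$ large enough, $\overline N_{M_n^2}\le N_{M_n^1}$.

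To conclude I would pass to the exponential scale. Since $M_n^2\to+\infty$, every open path from $(0,0)$ to $M_n^2$ extends to an infinite open path, so $\overline N_{M_n^2}=N_{M_n^2}$, hence $N_{M_n^2}\le N_{M_n^1}$ for $n$ large. Writing $M_n(y,h)=(k_n(y,h)\,y,\,S_{k_n(y,h)}(y,h))$ with $k_n(y,h)\to+\infty$, Lemma~\ref{LEMM-SsAdd} gives $\tfrac{1}{V_n^i}\log N_{M_n^i}\to\alpha_p(y_i,h_i)$; multiplying by $V_n^i/n$ and using the limits of $V_n^i/n$ turns this into $\tfrac1n\log N_{M_n^2}\to\alpha_p(y_2,h_2)$ and $\tfrac1n\log N_{M_n^1}\to(1+\delta)\alpha_p(y_1,h_1)$, so that $N_{M_n^2}\le N_{M_n^1}$ forces $\alpha_p(y_2,h_2)\le(1+\delta)\alpha_p(y_1,h_1)$. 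Because the hypothesis $\mu_p(\hat x_1-\hat x_2)\le\alpha_0\delta$ is symmetric, exchanging the indices $1$ and $2$ gives also $\alpha_p(y_1,h_1)\le(1+\delta)\alpha_p(y_2,h_2)$, and combining both inequalities with the bound $\alpha_p(y_i,h_i)\le\log(2d+1)$ from Lemma~\ref{LEMM-SsAdd} yields $|\alpha_p(y_1,h_1)-\alpha_p(y_2,h_2)|\le\delta\log(2d+1)=\epsilon$.

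The hard part, converting the geometry of the coupled zone into the path-count comparison for two nearby directions sitting at heights in ratio $1+\delta$, is already handled by Lemma~\ref{LEMM-continuitemiam}; what remains and must be done carefully here is the bookkeeping between the two asymptotic heights $V_n^i/n$ when going from the per-height limit of Lemma~\ref{LEMM-SsAdd} to a per-$n$ statement, the identification $\overline N_{M_n^2}=N_{M_n^2}$, and the calibration $\delta=\epsilon/\log(2d+1)$ chosen so that the multiplicative slack $1+\delta$ becomes an additive slack at most $\epsilon$. The height asymmetry built into Lemma~\ref{LEMM-continuitemiam} is harmless precisely because its hypothesis is symmetric in the two directions, which is what allows swapping the roles of $1$ and $2$.
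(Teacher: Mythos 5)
Your proof is correct and follows essentially the same route as the paper: feed the two shifted regeneration sequences $M_n(y_2,h_2)$ and $M_{\lceil n(1+\delta)\rceil}(y_1,h_1)$ into Lemma~\ref{LEMM-continuitemiam} with $C=1$, observe that $\overline N_{M_n^2}=N_{M_n^2}$ because $M_n^2\to+\infty$, pass to the logarithmic scale via Lemma~\ref{LEMM-SsAdd}, and symmetrize. In fact your bookkeeping is slightly cleaner than the paper's: the paper applies the coupled-zone lemma with small parameter $\epsilon$ and height ratio $1+\epsilon$, which only yields $|\alpha_p(y_1,h_1)-\alpha_p(y_2,h_2)|\le\epsilon\log(2d+1)$, whereas your explicit calibration $\delta=\epsilon/\log(2d+1)$ and $\alpha=\alpha_0/\log(2d+1)$ delivers exactly the bound $\le\epsilon$ asserted in the statement.
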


\begin{proof} Take, for every $n$, $(Z_n^1,V_n^1)=M_{n(1+\epsilon)}(y_1,h_1)$ and $(Z_n^2,V_n^2)=M_{n}(y_2,h_2)$. With the previous Lemma, we obtain
\begin{eqnarray*}
\overline N_{M_{n}(y_2,h_2)} & \le  &\overline N_{M_{n(1+\epsilon)}(y_1,h_1)}, \\
\frac{1}{V_n(y_2,h_2)} \log \overline N_{M_n(y_2,h_2)} & \le  & \frac{H_{n(1+\epsilon)}(y_1,h_1)}{V_n(y_2,h_2)} \frac{1}{H_{n(1+\epsilon)}(y_1,h_1)} \log \overline N_{M_{n(1+\epsilon)}(y_1,h_1)}, \\
\alpha_p(y_2,h_2) & \le & (1+\epsilon)\alpha_p(y_1,h_1).
\end{eqnarray*}
Exchanging the roles of the $(y_i,h_i)$, we obtain
$$|\alpha_p(y_2,h_2)-\alpha_p(y_1,h_1)| \le \epsilon\log (2d+1).$$
\end{proof}

We define the following equivalence relation of the points in $\Zd \times \N^*$: 
$$(y_1,h_1) \sim (y_2,h_2) \quad \Leftrightarrow \quad \frac{y_1}{\Ebarre_p(s(y_1,h_1))} = \frac{y_2}{\Ebarre_p(s(y_2,h_2))}.$$
Lemma \ref{LEMM-continuity} ensures that if
$(y_1,h_1) \sim (y_2,h_2)$, then $ \alpha_p(y_1,h_1)=\alpha_p(y_2,h_2)$.
We can thus define on the quotient set of directions $D_p$, defined in \eqref{E:defiDp},
%$$D_p=\left\{ \frac{y}{\Ebarre_p(s(y,h))}: \; y \in \Zd, h \in \N\right\}$$ 
the following directional limit:
$$\tilde \alpha_p \left( \frac{y}{\Ebarre_p(s(y,h))}\right)= \alpha_p (y,h).$$
Lemma \ref{LEMM-continuity} ensures that the application $\tilde \alpha_p$ is uniformly continuous on each $D_p \cap B_{\mu_p}(0,(1-\beta))$. Note that the $\alpha$ given by Lemma~\ref{LEMM-continuity} gives an upper bound for its modulus of continuity. As, with Lemma \ref{LEMM-dirdense}, $D_p \cap  B_{\mu_p}(0,(1-\beta))$ is dense in the compact set 
$ B_{\mu_p}(0,(1-\beta))$, we can extend $\tilde \alpha_p$ to any $ B_{\mu_p}(0,(1-\beta))$, and then to $\mathring B_{\mu_p}(0,1)$.
 
\subsection{Concavity}

\begin{lemm}
\label{CORO-concave}
The application $\tilde \alpha_p$ is concave on  $\mathring B_{\mu_p}(0,1)$.
\end{lemm}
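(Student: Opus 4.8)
The plan is to prove concavity first at the level of the rational directions $D_p$ and then extend by the density and continuity already established in the previous subsection. Concretely, I would fix two directions in $D_p$, represented by $(y_1,h_1)$ and $(y_2,h_2)$ in $\Zd\times\N^*$, and a convex combination; the key observation is that the \emph{barycentre} of the two associated regenerating points is again (asymptotically) a regenerating point in a direction of $D_p$. So the first step is to show that for integers $a,b\ge 1$ the pair $(ay_1+by_2,\,h)$, for a suitable $h$ tuned so that the slope matches, satisfies
$$\frac{ay_1+by_2}{\Ebarre_p(s(ay_1+by_2,h))}\longrightarrow \lambda\,\frac{y_1}{\Ebarre_p(s(y_1,h_1))}+(1-\lambda)\,\frac{y_2}{\Ebarre_p(s(y_2,h_2))}$$
as $a,b\to\infty$ with $a/(a+b)$ tuned to produce the weight $\lambda$, exactly as in the proof of Lemma \ref{LEMM-dirdense}. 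This uses only Proposition \ref{PROP-sigma}\,(e)--(f) and \eqref{EQ-LGNsigma}.

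The heart of the argument is a \emph{concatenation inequality for the number of open paths}. Given the sequences of regenerating points $(k y_1, S_k(y_1,h_1))$ and $(k y_2, S_k(y_2,h_2))$, one builds a combined sequence of regenerating points obtained by following the first direction for roughly $\lambda m$ regeneration steps and then, restarting from that point (using the invariance and independence of Proposition \ref{PROP-sigma}\,(b)--(c) and Lemma \ref{LEMM-sigma}), following the second direction for roughly $(1-\lambda)m$ steps. Concatenation of open paths gives, along these regenerating levels,
$$N_{(\text{combined point})}\ \ge\ N_{(k_1 y_1, S_{k_1}(y_1,h_1))}\cdot \bigl(N_{(k_2 y_2, S_{k_2}(y_2,h_2))}\circ \hat\theta_{(y_1,h_1)}^{k_1}\bigr),$$
where $k_1\approx\lambda m$, $k_2\approx(1-\lambda)m$. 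Taking $\tfrac1{\text{(height)}}\log$ of both sides, using Lemma \ref{LEMM-SsAdd} for each factor (the limits are deterministic), and using that the total height is asymptotically $S_{k_1}(y_1,h_1)+S_{k_2}(y_2,h_2)\sim \lambda'\cdot(\text{stuff})$ — i.e. the weighted average of the two mean regeneration heights — yields
$$\tilde\alpha_p(\text{barycentre slope})\ \ge\ \mu\,\tilde\alpha_p\!\left(\tfrac{y_1}{\Ebarre_p(s(y_1,h_1))}\right)+(1-\mu)\,\tilde\alpha_p\!\left(\tfrac{y_2}{\Ebarre_p(s(y_2,h_2))}\right)$$
for the appropriate weight $\mu$ determined by the ratio of the accumulated heights; one must check that choosing $a,b$ (equivalently $k_1,k_2$) correctly realizes any prescribed $\mu\in[0,1]$ in the limit, which is again an elementary continuity/intermediate-value argument on the parameters. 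This establishes midpoint-type concavity on the dense set $D_p\cap\mathring B_{\mu_p}(0,1)$, with all coefficients rational-approximable.

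The final step is routine: $\tilde\alpha_p$ is (uniformly) continuous on every $D_p\cap B_{\mu_p}(0,1-\beta)$ by Lemma \ref{LEMM-continuity}, and $D_p$ is dense in $\mathring B_{\mu_p}(0,1)$ by Lemma \ref{LEMM-dirdense}, so the inequality $\tilde\alpha_p(\lambda x_1+(1-\lambda)x_2)\ge \lambda\tilde\alpha_p(x_1)+(1-\lambda)\tilde\alpha_p(x_2)$ passes to the closure for all $x_1,x_2\in\mathring B_{\mu_p}(0,1)$ and all $\lambda\in[0,1]$, which is concavity. The main obstacle I anticipate is bookkeeping in the concatenation step: one must ensure that the "switch of direction" at the intermediate regenerating point is legitimate (that the intermediate point really is of the form to which the shift $\hat\theta_{(y_1,h_1)}^{k_1}$ and the independence statements apply) and that the resulting combined point genuinely lies in $D_p$ with the claimed asymptotic slope, so that $\tilde\alpha_p$ is actually defined there and the value is the relevant $\alpha_p(\cdot,\cdot)$; handling the error terms coming from $\sigma(y)$ being only $o(m)$ relative to the accumulated $\sigma(0)$-sums is what makes the slope converge, and this needs Proposition \ref{PROP-sigma}\,(f) to control everything uniformly.
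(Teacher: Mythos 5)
Your proposal correctly identifies the two structural pillars of the paper's proof: the concatenation inequality along a composite sequence of regenerating points, and the final density-and-continuity extension from $D_p$ to all of $\mathring B_{\mu_p}(0,1)$. However, there is a genuine gap in the central step. After taking logarithms and dividing by the height, you assert that the left-hand side $\frac{1}{V_n}\log N_{(\text{combined point})}$ converges to $\tilde\alpha_p(\text{barycentre slope})$. But the composite sequence of regenerating points $(Z_n^2,V_n^2)=\bigl(nky+nk'y',\,S_{kn}(y,h)+S_{k'n}(y',h')\circ\hat\theta_{(y,h)}^{kn}\bigr)$ is \emph{not} a sequence of the form $(my'',S_m(y'',h''))$ for any fixed $(y'',h'')\in\Zd\times\N^*$, so Lemma \ref{LEMM-SsAdd} does not apply to it, and $\tilde\alpha_p$ is not directly defined at this "combined direction" via the construction in the preceding subsection. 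You mention in your final paragraph that one must check "that the resulting combined point genuinely lies in $D_p$ with the claimed asymptotic slope, so that $\tilde\alpha_p$ is actually defined there and the value is the relevant $\alpha_p(\cdot,\cdot)$" --- but this is precisely what cannot be shown without a further idea, and it is where your argument stops short of a proof.

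The paper resolves this by splitting the needed estimate into a lower bound and an upper bound with a genuine comparison. The concatenation gives the lower bound (your inequality, the paper's \eqref{etdeun}), after which the paper invokes Lemma \ref{LEMM-continuitemiam} --- the coupled-zone argument --- to compare $\overline N_{(Z_n^2,V_n^2)}$ with $\overline N_{(Z_n^1,V_n^1)}$, where $(Z_n^1,V_n^1)=M_{(ks+k's')n(1+\epsilon)}(y'',h'')$ is a \emph{genuine} regenerating sequence for a point $(y'',h'')\in\Zd\times\N^*$ chosen close to the barycentre slope, but at a slightly higher level. This yields $\overline N_{(Z_n^2,V_n^2)}\le\overline N_{(Z_n^1,V_n^1)}$ for large $n$, and then Lemma \ref{LEMM-SsAdd} does apply to the right-hand side, giving a limit of $(1+\epsilon)\tilde\alpha_p(\hat y'')$; the factor $(1+\epsilon)$ and the nearby approximation error vanish when $\epsilon\to 0$. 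Your plan needs this comparison step; without it, the concatenation inequality yields a lower bound for a quantity that is not yet identified with a value of $\tilde\alpha_p$, and the concavity inequality does not follow.
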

\begin{proof} Fix $\beta\in (0,1)$, and consider 
$\hat y, \hat y ' \in D_p \cap B_{\mu_p}(0,(1-\beta))$. There exist $(y,h), (y',h') \in \Z^d \times \N$ such that
\begin{align*}
& s = \Ebarre_p(s(y,h)) \quad   & s'  = \Ebarre_p(s(y',h')), \\
& \hat y  = \frac{y}{\Ebarre_p(s(y,h))}=\frac{y}{s} \quad   & \hat y'  = \frac{y'}{\Ebarre_p(s(y',h'))}=\frac{y'}{s'}.
\end{align*}
Let $\alpha$ be given by Lemma \ref{LEMM-continuitemiam}. 
Let $\lambda \in (0,1)$ and  $\epsilon>0$. 
Writing, for $k,k' \ge 1$:
$$
\frac{ky+k'y'}{k s + k' s'} = \frac{k s  }{k s  + k' s'} \hat y +
\frac{ k' s' }{k s  + k' s'}\hat y' \in  B_{\mu_p}(0,1-\beta ),
$$
we can choose $k,k' \ge k_0$ such that :
\begin{align*}
\mu_p \left( \frac{ky+k'y'}{ks+k's'} - 
\left(\lambda \hat y +(1-\lambda')  \hat y' \right) \right) & \le \alpha \epsilon, \\
\text{and } \frac{ks \tilde\alpha_p\left( \hat y \right) + k's' \tilde \alpha_p \left( \hat y' \right)}{ks+k's'} 
& \ge  
\frac1{1+\epsilon}
\left(\lambda \tilde\alpha_p \left( \hat y \right)+(1-\lambda)\tilde\alpha_p\left(\hat y'\right)\right).
\end{align*}
In particular, the uniform continuity of $\tilde \alpha_p$ on $\overline B_{\mu_p}(0,1-\beta )$ ensures that 
\begin{equation}
\label{etdezero}
\tilde \alpha_p \left( \frac{ky+k'y'}{ks+k's'} \right) \le \alpha_p \left(\lambda \hat y +(1-\lambda)  \hat y' \right) +\epsilon.
\end{equation}
By Lemma \ref{LEMM-dirdense}, we can choose $(y'',h'') \in \Z^d\times\N^*$ such that (with the same notation):
$$
\mu_p\left(\hat y''- \frac{ky+k'y'}{ks+ k's'}\right) \le \alpha\epsilon \quad \text{ and } \quad 
\mu_p\left(\hat y''\right) \le 1-\beta.
$$
In particular, the uniform continuity of $\tilde \alpha_p$ on $B_{\mu_p}(0,1-\beta )$ and \eqref{etdezero} ensure that 
\begin{equation}
\label{etdezeroun}
\tilde \alpha_p (\hat y'') \le \tilde \alpha_p \left( \frac{ky+k'y'}{ks+k's'} \right) +\epsilon\le \alpha_p \left(\lambda \hat y +(1-\lambda)  \hat y' \right)+2\epsilon.
\end{equation}
For every $n \ge 1$, set
\begin{align*}
Z^2_n & = nky+nk'y', \\
V_n^2 & = S_{kn}(y,h) + S_{k'n}(y',h') \circ \hat\theta^{kn}_{(y,h)}
\end{align*}
By the law of large numbers, $S_{kn}(y,h)/n$ and $S_{k'n}(y',h')/n$ almost surely converge, respectively, to $ks$ ans $k's'$. Since $s(y',h')\in L^2(\Pbarre_p)$, complete convergence actually occurs (see e.g.~Hsu-Robbins~\cite{MR0019852}), so, since the $\hat\theta$ operators leave $\Pbarre_p$ invariant, we get the following $\Pbarre_p$ almost sure convergences:
\begin{align*}
\frac{V_n^2}n & \to  ks + k's' \quad 
\text{ and } \quad \frac{Z_n^2}{V_n^2} \to \frac{ky+k'y'}{ks + k's'}.
\end{align*}
Note that $(0,0) \to (nky,S_{kn}(y,h)) \to (Z^2_n,V_n^2) \to +\infty$. In particular, 
\begin{align*}
\overline N_{Z^2_n,V_n^2} & \ge \overline N_{nky,S_{kn}(y,h)} \times 
\overline N_{Z^2_n,V_n^2} \circ \hat\theta^{kn}_{(y,h)}, \\
\frac{\log \overline N_{Z^2_n,V_n^2}}{n(ks + k's')}  & \ge 
\frac{nks}{n(ks + k's')} \frac{\log \overline N_{nky,S_{kn}(y,h)}}{nks}  + \left( \frac{nk's'}{n(ks + k's')} \frac{\overline N_{nk'y', S_{k'n}(y',h')}}{nk's' } \right)\circ \hat\theta^{kn}_{(y,h)}.
\end{align*}
Using the $L^1$ convergence in the subadditive lemma \ref{LEMM-SsAdd}, we then obtain
\begin{align}
\liminf \; \Ebarre_p \left( \frac{\log \overline N_{Z^2_n,V_n^2}}{n(ks + k's')}\right) & \ge  \frac{ks}{ks + k's'}\tilde \alpha_p(\hat y)+ \frac{k's'}{ks + k's'} \tilde \alpha_p(\hat y') \nonumber \\
& \ge  
\frac{\lambda \tilde\alpha_p \left( \hat y \right)+(1-\lambda)\tilde\alpha_p\left(\hat y'\right)}{1+\epsilon}
. \label{etdeun}
\end{align}
For every $n \ge 1$, we now set $\displaystyle 
(Z_n^1,V_n^1) = M_{(ks + k's')n(1+\epsilon)}(y'',h'').
$
We have the following $\Pbarre_p$ almost sure convergence:
$$\frac{V_n^1}n \to (1+\epsilon)(ks + k's')
\quad \text{ and } \quad 
\frac{Z_n^1}{V_n^1} \to \frac{y''}{s''}.
$$
By Lemma \ref{LEMM-continuitemiam} we get,
$\Pbarre_p$ almost surely, for large enough $n$:
\begin{equation}
\label{etdedeux}
\overline{N}_{(Z_n^2,V_n^2)} \le \overline{N}_{(Z_n^1,V_n^1)}.
\end{equation}
With the $L^1$ convergence in Lemma \ref{LEMM-SsAdd} we get that :
$$
\lim_{n \to +\infty} \Ebarre_p \left( \frac1{n(ks + k's')} \log\left(\overline{N}_{(Z_n^1,V_n^1)}\right) \right) = (1+\epsilon) \tilde \alpha_p(\hat y'').
$$
With \eqref{etdezeroun}, \eqref{etdeun} and \eqref{etdedeux}, we obtain
$$(1+\epsilon)\left( \alpha_p \left(\lambda \hat y +(1-\lambda)  \hat y' \right)+2\epsilon \right)  \ge (1+\epsilon) \tilde \alpha_p(\hat y'') \ge \frac{\lambda \tilde\alpha_p \left( \hat y \right)+(1-\lambda)\tilde\alpha_p\left(\hat y'\right)}{1+\epsilon}
.$$
Letting $\epsilon$ tend to $0$, we get the concavity of $\tilde\alpha_p$ on $D_p \cap B_{\mu_p}(0,(1-\beta))$. Since $\tilde\alpha_p$ is continuous, a density argument completes the proof. 
\end{proof}

\subsection{Inequalities for the directional convergence} 

We now prove refined versions of Lemmas \ref{LEMM-liminf} and \ref{LEMM-limsup}.

\begin{lemm} For every subset $A$  of  $\mathring B_{\mu_p}(0,1)$ such that $\mathring A\ne \varnothing$, $\Pbarre_p$-almost surely, 
\regine{j'ai rajoute $\mathring A\ne \varnothing$}
\label{LEMM-liminfA}
$$\displaystyle \miniop{}{\liminf}{n\to +\infty} \frac{1}{n} \log \overline N_{nA,n} \ge \sup_{x \in \mathring{A}} \tilde{\alpha}_p(x).$$
\end{lemm}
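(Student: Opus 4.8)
The plan is to reduce the directional lower bound to the subsequential limit of Lemma~\ref{LEMM-SsAdd} via the regenerating points $(ny,S_n(y,h))$, exactly in the spirit of Lemma~\ref{LEMM-liminf}, but keeping track of the direction. Since $\sup_{x\in\mathring A}\tilde\alpha_p(x)=\sup\{\tilde\alpha_p(x):x\in\mathring A\cap D_p\}$ by continuity of $\tilde\alpha_p$ and density of $D_p$ (Lemma~\ref{LEMM-dirdense}), it suffices to fix $x\in\mathring A\cap D_p$, write $x=y/\Ebarre_p(s(y,h))$ for some $(y,h)\in\Zd\times\N^*$, and show $\liminf_n \frac1n\log\overline N_{nA,n}\ge\tilde\alpha_p(x)=\alpha_p(y,h)$; then take the supremum over such $x$ (a countable set, so the almost sure event is preserved).

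First I would recall from~\eqref{EQ-LGNphi} that, $\Pbarre_p$-almost surely, $S_{\varphi(n)}(y,h)/n\to 1$ and $\varphi(n)y/n\to y/\Ebarre_p(s(y,h))=x$, so the regenerating point $M_n:=(\varphi(n)y,S_{\varphi(n)}(y,h))$ has slope tending to $x$ and height tending to $n$. Hence for every $n$ large enough, $\varphi(n)y\in nA\cap\Zd$ (because $x\in\mathring A$, so a whole neighbourhood of $nx$ scaled by $n$ stays in $nA$), and moreover $(0,0)\to M_n\to+\infty$ by construction of the $s(y,h)$. The open paths counted by $\overline N_{M_n}=\overline N_{(\varphi(n)y,S_{\varphi(n)}(y,h))}$ all have height $S_{\varphi(n)}(y,h)\approx n$ and endpoint $\varphi(n)y\in nA$; to compare with $\overline N_{nA,n}$ (paths of height exactly $n$) I truncate or extend by at most $|S_{\varphi(n)}(y,h)-n|=o(n)$ steps. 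Concretely, writing $S_{\varphi(n)}(y,h)\ge n$ always (by definition of $\varphi$), one restricts each path counted by $\overline N_{M_n}$ to its first $n$ steps; this restriction still ends in a point $z$ with $(z,n)\to+\infty$, and $z\in nA\cap\Zd$ provided $S_{\varphi(n)}(y,h)-n$ is small enough that the endpoint has not drifted out of $nA$ — here one uses Lemma~\ref{LEMM-sautemouton} (so $S_{\varphi(n)}(y,h)-n\le\epsilon n$ eventually, in fact $=o(n)$) together with the fact that $x$ is in the interior of $A$. The truncation map is at most $(2d+1)^{S_{\varphi(n)}(y,h)-n}$-to-one, so
\[
\overline N_{nA,n}\ \ge\ (2d+1)^{-(S_{\varphi(n)}(y,h)-n)}\,\overline N_{M_n}.
\]

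Taking logarithms, dividing by $n$, and using $S_{\varphi(n)}(y,h)/n\to 1$ together with Lemma~\ref{LEMM-SsAdd} in the form $\frac1{S_{\varphi(n)}(y,h)}\log\overline N_{M_n}\to\alpha_p(y,h)$ (the subsequential limit evaluated along $k=\varphi(n)$, which is legitimate since $\varphi(n)\to+\infty$ and the convergence in Lemma~\ref{LEMM-SsAdd} is along the full sequence), we get $\liminf_n\frac1n\log\overline N_{nA,n}\ge\alpha_p(y,h)=\tilde\alpha_p(x)$. Since $\mathring A\cap D_p$ is countable, intersecting the corresponding almost sure events yields $\liminf_n\frac1n\log\overline N_{nA,n}\ge\sup_{x\in\mathring A\cap D_p}\tilde\alpha_p(x)=\sup_{x\in\mathring A}\tilde\alpha_p(x)$, as desired. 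The main obstacle is the bookkeeping at the truncation step: one must make sure that after cutting the path back to height $n$ the endpoint is genuinely in $nA\cap\Zd$ and still connects to infinity, which forces the use of $x\in\mathring A$ (to absorb the $o(n)$ lateral drift of the regenerating point into $nA$) and of the exponential control of $S_{\varphi(n)}(y,h)-n$ from Lemma~\ref{LEMM-sautemouton}; everything else is the same concatenation-and-renewal argument as in Lemma~\ref{LEMM-liminf}.
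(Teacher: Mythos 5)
Your argument is correct, but it takes a genuinely different route from the paper's. The paper places the regenerating point $M_n(y,h)$ just \emph{below} level $n$, at the first renewal above level $n(1-\epsilon)$, and then \emph{extends} each path from $(0,0)$ to $M_n$ upward by $n-V_n\approx\epsilon n$ steps; this extension is injective (distinct paths to $M_n$ already differ in their first $V_n\le n$ steps), so $\overline N_{M_n}\le\overline N_{nA,n}$ holds with no combinatorial loss, but to guarantee that the extended endpoint lands in $nA$ the paper must control the spread of $\xi^{Z_n}_{n-V_n}$ via the shape theorem and a Borel--Cantelli argument (the event $G_n$), and finally send the auxiliary $\epsilon$ to $0$. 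You instead place $M_n$ just \emph{above} level $n$, at $S_{\varphi(n)}\ge n$, and \emph{truncate} paths back to level $n$; the lateral drift $\|z-\varphi(n)y\|_1\le S_{\varphi(n)}-n$ is then controlled deterministically by the unit-step bound, with $S_{\varphi(n)}-n=o(n)$ supplied directly by~\eqref{EQ-LGNphi}, so no shape theorem and no auxiliary $\epsilon$ are needed. The price is that truncation is not injective, forcing the correction $\overline N_{nA,n}\ge(2d+1)^{-(S_{\varphi(n)}-n)}\overline N_{M_n}$, but this factor is $e^{-o(n)}$ and therefore harmless at the logarithmic scale. Two small remarks: the invocation of Lemma~\ref{LEMM-sautemouton} is superfluous here since the a.s.\ bound $S_{\varphi(n)}-n=o(n)$ already follows from $S_{\varphi(n)}/n\to 1$; and you should note explicitly that $\overline N_{M_n}=N_{M_n}$ (because $M_n\to\infty$ by construction), which is what lets you quote Lemma~\ref{LEMM-SsAdd} for $\overline N$ along the subsequence $k=\varphi(n)$ --- the same implicit identification the paper makes. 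With those clarifications your proof is complete and, if anything, slightly lighter than the one in the paper.
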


\begin{proof}
Let $L\in\R$ with $L<\sup_{x \in \mathring{A}} \tilde{\alpha}_p(x)$. There exists
$x\in \mathring{A}$ with $\tilde{\alpha}_p(x)>L$.
Fix $\epsilon \in (0,1)$ such that $B(x,8\epsilon)\subset A$. By the continuity of $\tilde \alpha_p$, if we take $\epsilon$ small enough, we can also ensure that
$\tilde{\alpha}_p>L$ on $B(x,8\epsilon)$.
With Lemma~\ref{LEMM-dirdense}, we can find $(y,h) \in \Zd\times \N$  such that $\displaystyle \hat y =\frac{y}{\Ebarre_p(s(y,h))}\in B(x,4\epsilon)$. 

We define $M_n(y,h)$ as the first point in the sequence $(ky, S_{(y,h)}(k))_{k\ge 1}$ of regerating points associated to $(y,h)$ to be above level $n(1-\epsilon)$. Using the notation introduced in~\eqref{levarphi}, we set
\begin{eqnarray*}
\forall (y,h) \in F \quad k_n=k_n(y,h) & = &\varphi_{(y,h)}(n(1-\epsilon)), \\
Z_n=Z_n(y,h) & = & k_n.y \in \Zd \quad \text{ and } \quad V_n=V_n(y,h)=S_{k_n}(y,h) \in \N, \\
M_n=M_n(y,h)& = & (Z_n,V_n).
\end{eqnarray*}
The law of large numbers \eqref{EQ-LGNphi} says that
\begin{align}
\label{oucestA}
Z_n(y,h) \sim  n(1-\epsilon)\hat y \text{ and } V_n(y,h) \sim n(1-\epsilon).
\end{align}
Note 
$$G_n = \bigcap_{\substack{M\in B_{\mu_p} ( n(1-\epsilon)\hat y, \epsilon n ) \times[n(1-\epsilon)..n(1-\epsilon/2)], \\
k \ge \epsilon n /2}}
 \{ \xi_k^0 \subset B_{\mu_p}(0, (1+\epsilon) k) \} \circ \theta_{M}.$$
Since $\theta_{M}$ preserves $\P_p$, we easily deduce from~\eqref{EQ-tauexp}, Proposition \ref{PROP-GD} and a Borel--Cantelli argument that $\Pbarre_p$ almost surely, $G_n$ holds for $n$ large enough.

Now take $n$ large enough such that $G_n$ holds and, with \eqref{oucestA},
$Z_n \in B_{\mu_p} ( n(1-\epsilon)\hat y, \epsilon n )$ and $(1-\epsilon)n \le V_n \le (1-\epsilon/2)n$, so that $\epsilon n /2 \le n-V_n \le \epsilon n $. Then $G_n$ ensures that ($\epsilon <1$)
$$\xi^{Z_n}_{n-V_n} \subset B_{\mu_p}(Z_n, (1+\epsilon) \epsilon n)  \subset B_{\mu_p}(n(1-\epsilon)\hat y, 3 \epsilon n) \subset B_{\mu_p}(n\hat y, 4 \epsilon n) \subset n\mathring A.$$
So $\overline N_{M_n} \le \overline N_{nA,n}$, and then
$$\frac{1}{n} \log \overline N_{nA,n} \ge \frac{V_n}{n} \frac{1}{V_n} \log \overline N_{M_n}.$$
With \eqref{oucestA} and Lemma \ref{LEMM-SsAdd}, we deduce that $\Pbarre_p$ almost surely,
$$\miniop{}{\liminf}{n\to +\infty} \frac{1}{n} \log \overline N_{nA,n} \ge
 \frac{1}{1-\epsilon}\alpha_p(\hat y) \ge \frac{1}{1-\epsilon}L.
$$
Letting $\epsilon$ going to $0$  completes the proof.
\end{proof}

\begin{lemm} For every non-empty set $A$ such that $\overline A \subset \mathring B_{\mu_p}(0,1)$, $\Pbarre_p$-almost surely,
\label{LEMM-limsupA}
$$\displaystyle \miniop{}{\limsup}{n\to +\infty} \frac{1}{n} \log \overline N_{nA,n} \le \sup_{x \in {A}} \tilde{\alpha}_p(x).$$
\end{lemm}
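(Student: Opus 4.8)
The plan is to mirror the structure of Lemma~\ref{LEMM-limsup}, but keeping track of the direction of the endpoint. First I would fix $\epsilon>0$ and $\eta\in(0,1)$. Since $\overline A$ is compact and contained in the open ball, there is $\beta\in(0,1)$ with $\overline A\subset B_{\mu_p}(0,1-\beta)$, and hence by uniform continuity of $\tilde\alpha_p$ on $B_{\mu_p}(0,1-\beta)$, we may cover $\overline A$ by finitely many balls $B_{\mu_p}(\hat x_j, r)$ centered at points of $D_p$ (using Lemma~\ref{LEMM-dirdense}) with $r$ small enough that $\tilde\alpha_p(\hat x_j)\le \sup_{x\in A}\tilde\alpha_p(x)+\epsilon$ for each $j$. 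Write $\hat x_j = y_j/\Ebarre_p(s(y_j,h_j))$ for suitable $(y_j,h_j)\in\Zd\times\N^*$, and set $F=\{(y_j,h_j)\}$, a finite set.

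Next, for each $(y,h)\in F$ I would introduce the regenerating points $M_n(y,h)=(Z_n(y,h),V_n(y,h))$ chosen to sit just above level $n(1+\epsilon)$ (exactly as in Lemma~\ref{LEMM-limsup}), so that $Z_n/n\to(1+\epsilon)\hat x$ and $V_n/n\to 1+\epsilon$. Using the coupled-zone event $G_n$ (the same good event built from Proposition~\ref{PROP-GD}, \eqref{EQ-tauexp} and Borel--Cantelli, applied to reverse-time translates $\theta^\downarrow_M$ over the relevant finite box of $M$'s), the key geometric claim is: for $n$ large, every open path $\gamma$ from $(0,0)$ to $(x,n)$ with $x\in nA$ and $(x,n)\to\infty$ is the restriction of an open path from $(0,0)$ to $M_n(y,h)\to\infty$, for any $(y,h)\in F$ whose associated ball contains $x/n$. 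The point is that $x/n$ close to $\hat x_j$ and $Z_n(y_j,h_j)/n$ close to $(1+\epsilon)\hat x_j$ force $\mu_p(x-Z_n)$ to be of order $\epsilon n$ (smaller than the radius of the coupled zone $K'_{V_n-n}$, since $V_n-n\sim\epsilon n$), so the coupled zone lets us reroute the path above level $n$ toward $M_n$. This yields
$$\overline N_{nA,n}\le\sum_{(y,h)\in F}\overline N_{M_n(y,h)}.$$
Then Lemma~\ref{LEMM-SsAdd} gives $\frac1{V_n}\log\overline N_{M_n(y,h)}\to\alpha_p(y,h)=\tilde\alpha_p(\hat x)$, and since $V_n\sim n(1+\epsilon)$ we conclude $\frac1n\log\overline N_{nA,n}\le (1+\epsilon)(\sup_{x\in A}\tilde\alpha_p(x)+2\epsilon)$ for $n$ large; letting $\epsilon\to0$ finishes the proof.

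The main obstacle I anticipate is the geometric step: one must check carefully that the ball $B_{\mu_p}(Z_n(y,h),(1-\eta)\epsilon n)$ (a coupled zone of radius $\sim(1-\eta)\epsilon n$ about $Z_n$ at the reverse-time origin $M_n$) actually contains all the relevant endpoints $x\in nA$ with $x/n$ in the $j$-th covering ball — this forces a bookkeeping relation between the covering radius $r$, the slack $\epsilon$, and the parameter $\eta$, of the same flavor as the inequality $\alpha+(1-\beta)<\frac{K-2}{K}(1-\eta)$ in Lemma~\ref{LEMM-continuitemiam}. Once the radii are chosen consistently (covering radius $\ll\epsilon$), the rest is routine: the finiteness of $F$ absorbs into a constant at the logarithmic scale, and the shape theorem~\eqref{E-shapetheo} together with \eqref{EQ-LGNphi} controls all the ``$\sim$'' asymptotics uniformly over the finitely many directions in $F$. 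A minor additional point is that one works with $\overline N$ (paths extendable to infinity) throughout, so that the coupled-zone rerouting is legitimate; passing from $\overline N_{nA,n}$ to $N_{nA,n}$ in the final theorem is then handled exactly as in Lemma~\ref{LEMM-pareil}.
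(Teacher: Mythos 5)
Your proposal is correct and follows essentially the same route as the paper: approximate $\overline A$ by finitely many directions from $D_p$, use the $M_n(y,h)$ regenerating points slightly above level $n(1+\epsilon)$ together with the reverse-time coupled-zone event from Lemma~\ref{LEMM-limsup} to reroute every surviving path from $(0,0)$ to $nA\times\{n\}$ into one of finitely many $\overline N_{M_n(y,h)}$, then invoke Lemma~\ref{LEMM-SsAdd} and the continuity of $\tilde\alpha_p$. The only (cosmetic) difference is that the paper keeps the single cover $F$ of $B_{\mu_p}(0,1+\epsilon)$ from Lemma~\ref{LEMM-limsup} and restricts the final sum to $F_{A+B_{\mu_p}(0,2\epsilon)}$, whereas you build a fresh cover of $\overline A$; both handle the $r$-versus-$\epsilon$ bookkeeping you flag, the paper by centering its covering balls at $(1+\epsilon)\hat x_j$ so the factor $(1+\epsilon)$ is absorbed into the cover rather than into the radius budget.
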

   
\begin{proof} 
The proof is a refinement of that of Lemma \ref{LEMM-limsup}.
Let $\delta>0$. Since $\overline{A}$ is a compact subset of $\mathring B_{\mu}(0,1)$ and
$z\mapsto \tilde{\alpha}_p(z)$ is continuous on $\mathring B_{\mu}(0,1)$, one can find 
$\epsilon \in (0,1)$ such that $$\miniop{}{\sup}{A+B_{\mu_p}(0,2\epsilon)} \tilde{\alpha}_p\le \delta+\miniop{}{\sup}{ A} \tilde{\alpha}_p.$$
Now take $\eta>0$ and~$F$ as defined in the proof of Lemma \ref{LEMM-limsup} and note 
$$F_{B}=\left\{(y,h)\in F:\;  \frac{y}{\Ebarre_p(s(y,h))}\in B\right\}.$$
Now consider $x\in nA$. Since $nA\subset B_{\mu_p}(0,n(1+\epsilon))$, for $n$ large enough, we can find $(y,h)\in F$ such that
$x/n\in B_{\mu_p} \left( \frac{(1+\epsilon)y}{\Ebarre_p(s(y,h))},(1-\eta)\epsilon/2 \right)$. We have 
\begin{align*}
\mu_p\left(\frac{y}{\Ebarre_p(s(y,h))}-\frac{x}n\right)&\le \mu_p\left((1+\epsilon)\frac{y}{\Ebarre_p(s(y,h))}-(1+\epsilon)\frac{x}n\right)\\
& \le \mu_p\left(\frac{(1+\epsilon)y}{\Ebarre_p(s(y,h))}-\frac{x}n\right)+\epsilon\mu_p(x/n)\\
&\le (1-\eta)\epsilon/2+\epsilon\mu_p(x/n) \le 2 \epsilon.
\end{align*}
Since $x/n\in A$, we get $(y,h)\in F_{A+B_{\mu_p}(0,2\epsilon)}$.
Now, following the proof of Lemma \ref{LEMM-limsup},  
 for $n$ large enough, for each $x\in nA$, the $n$ first steps of an open path that goes from $(0,0)$ to $(x,n)$ and then to infinity are also the $n$ first steps  of an open path which contributes to $N_{M_n(y,h)}$ for any $(y,h) \in F_{A+B_{\mu_p}(0,2\epsilon)}$, which gives
\begin{align}
\label{larafinee}
\overline{N}_{nA,n}&\le \miniop{}{\sum}{(y,h)\in F_{A+B_{\mu_p}(0,2\epsilon)}} \overline{N}_{M_n(y,h)}.
\end{align}
As previously, we get
$$\miniop{}{\limsup}{n \to +\infty} \frac{1}{n} \log(\overline N_{nA,n})\le\sup_{ F_{A+B_{\mu_p}(0,2\epsilon)}} \alpha_p\le \sup_{ A+B_{\mu_p}(0,2\epsilon)}\tilde{\alpha}_p\le \delta+\miniop{}{\sup}{A} \tilde{\alpha}_p.$$
We complete the proof by letting $\delta$ go to $0$.
\end{proof}

\subsection{Proof of Theorem \ref{THEO-PGD}}
It remains to skip from $\overline N_{nA,n}$ to $N_{nA,n}$.

Fix $ 0<\epsilon<1$ and define, for $n \ge 1$,  the following event
\begin{align*}
G_n & =\miniop{}{\cap}{\|z\|_1\le n} \{\tau<\epsilon n\text{ or }\tau=+\infty\}\circ \theta_{(z,\lfloor n(1-\epsilon)\rfloor)} \\
& \cap \miniop{}{\cap}{\|z\|_1\le n} \{ K'_{\epsilon n} \subset B_{\mu_p}(0,2\epsilon n)\} \circ \theta^{\downarrow}_{(z,n)}.
\end{align*}
As before, a Borel-Cantelli argument ensures that $\Pbarre_p$-almost surely, $G_n$ occurs for every large enough $n$. 

Assume that $G_n$ occurs. Consider a path $\gamma= (\gamma_i,i)_{0 \le i \le n}$ from $(0,0)$ to $nA \times \{n\}$ and set $z=\gamma_{\lfloor n(1-\epsilon)\rfloor}$: as $\tau\circ \theta_{(z,\lfloor n(1-\epsilon)\rfloor)} \ge \epsilon n$, the event $G_n$ implies that $\tau\circ \theta_{(z,\lfloor n(1-\epsilon)\rfloor)} =+\infty$. Looking backwards in time, we see that all these $z$ are in $nA+B_{\mu_p}(0,2\epsilon n)$. So $(\gamma_i,i)_{0 \le i \le \lfloor n(1-\epsilon)\rfloor}$ contributes to $\overline{N}_{nA+B_{\mu_p}(0,2\epsilon n), \lfloor(1-\epsilon) n\rfloor}$ and thus, on $G_n$,
\begin{align*}
 N_{nA,n} & \le (2d+1)^{\epsilon n+1} \overline{N}_{nA+B_{\mu_p}(0,2\epsilon n), \lfloor(1-\epsilon) n \rfloor}, \\
\text{so } \; \frac{1}{n} \log N_{nA,n} & \le \left( \epsilon+\frac1{n} \right) \log  (2d+1)+ \frac{1}{ n} \log \overline{N}_{nA+B_{\mu_p}(0,2\epsilon n), \lfloor(1-\epsilon) n \rfloor}.
% \\
%& \le \left( \epsilon+\frac1{n} \right) \log  (2d+1)+ \frac{1}{\lfloor n(1-\epsilon)\rfloor} \log \overline{N}_{\lfloor n(1-\epsilon)\rfloor}.
\end{align*}
Now, we first use Lemma \ref{LEMM-limsupA} and take the $\limsup$, and then we use the continuity of $\tilde \alpha_p$ and let $\epsilon$ go to $0$:
\begin{align*}
\miniop{}{\limsup}{n\to +\infty} \frac{\log {N}_{nA,n}}{n} & \le \epsilon \log  (2d+1) +\sup_{x \in A+B_{\mu_p}(0,2\epsilon )} \tilde \alpha_p(x), \\
\text{so } \; \miniop{}{\limsup}{n\to +\infty} \frac{\log {N}_{nA,n}}{n} & \le \sup_{x \in A} \tilde \alpha_p(x).
\end{align*}
As $\overline {N}_{nA,n} \le {N}_{nA,n}$, with Lemma \ref{LEMM-liminfA} we obtain that
$$\miniop{}{\lim}{n\to +\infty} \frac{\log {N}_{nA,n}}{n}= \sup_{x \in A} \tilde \alpha_p(x).$$
This completes the proof.

\medskip

\emph{Olivier Garet and R\'egine Marchand would like to warmly thank Matthias Birkner and Rongfeng Sun for pointing out an error in a previous version of the paper.} 

%%%%%%%%%%%%%%%%%%%%%%%%%%%%%%%%%%%%%%%%%%%%%%%%%%%%%

\def\refname{References}
\bibliographystyle{plain}

\end{document}